\newtheorem*{thmnoname}{Theorem}
\newtheorem{Theorem}{Theorem}[section]
\newtheorem{lemma}{Lemma}
\newtheorem{definition}{Definition}
\newtheorem{obs}{Observation}
\newtheorem{claim}{Claim}[section]
\newtheorem{problem}{Problem}
\newtheorem{ih}{IH}
\newcommand{\ol}{\overline}
\title{Coloring half-planes and bottomless rectangles}
\date{}
\author{Bal\'azs Keszegh\thanks{R\'enyi Institute, Hungary, {\tt keszegh@renyi.hu}~~
        This work was done while visiting the School of Computing Science at Simon
                                Fraser University.}}
\begin{document}
\maketitle

\begin{abstract}
We prove lower and upper bounds for the {\em chromatic number} of certain {\em hypergraphs} defined by geometric regions. This problem has close relations to {\em conflict-free colorings} \cite{evenlotker}. 
One of the most interesting type of regions to consider for this
problem is that of the {\em axis-parallel rectangles}. We completely solve
the problem for a special case of them, for {\em bottomless
rectangles}. We also give an almost complete answer for {\em half-planes} and pose
several open problems. Moreover we give efficient coloring algorithms.
\end{abstract}

\section{Introduction} \label{intro}
Given a set of points $P$ and a family of regions $\cal F$, there is a natural way to associate two hypergraphs to them, dual to each other.
Following the notations of \cite{tardosmanu}, $H(P,\cal F)$ denotes the hypergraph on the vertex set $P$, whose hyperedges are all subsets of $P$ that can be obtained by intersecting $P$ with a member of $\cal F$. The dual hypergraph $H^*(P,\cal F)$ denotes the hypergraph on the vertex set $\cal F$, for each $p\in P$ it has a hyperedge consisting of all regions in $\cal F$ that contain $p$. In the dual case we are usually interested in the case when $P$ equals to $P_{plane}$, the set of all points of the plane.

A {\em proper coloring} of a hypergraph is a coloring of its vertex set $V(H)$ such that no hyperedge with at least two vertices is monochromatic. The {\em chromatic number} of $H$ is the smallest number of colors for which there exists a proper coloring of $H$. Let $H_{k}$ (resp. $H_{\ge k}$) denote the hypergraph on the vertex set $V(H)$, consisting of all $k$-element (resp. at least $k$-element) hyperedges of $H$. Note that by definition $\chi (H)=\chi (H_{\ge 2})$. By slightly abusing the notation, given a hypergraph $H$, we call a coloring of $V(H)$ a {\bf $k$-proper coloring of $H$} if it is a proper coloring of $H_{\ge k}$. We are interested in determining $\chi(H_{\ge k}(P,\cal F))$ and $\chi(H^*_{\ge k}(P,\cal F))$ for certain families. 
The study of the chromatic number of such hypergraphs was stimulated by {\em conflict-free colorings} (\cite{smorodnew}). We discuss this relationship in Section \ref{cfsubsec}.
Regions for which proper colorings of the corresponding hypergraphs and conflict-free colorings has been studied, include discs (\cite{evenlotker}, \cite{smorodnew}, etc.) and axis-parallel rectangles (\cite{chenpach}, \cite{pachtoth}, \cite{elbassioni}, etc), for a more detailed overview of these results see Section \ref{secdiscussion}.

Throughout the paper, in the primal case we consider only finite point sets. In the dual case $P$ is usually equal to $P_{plane}$, the set of all points of the plane and we consider only a finite set of regions $\cal F$.

Now we can summarize the results presented in this paper. For most regions we study, the chromatic number of $H_{\ge k}({\cal F}, P)$ (resp. of $H^*_{\ge k}({\cal F}, P_{plane})$) can always be bounded from above by a constant independent of $P$ (resp. of $\cal F$). Our aim is to determine the best possible upper bound for certain families and any given $k$. In Section \ref{secbottomless} we solve all cases for {\em bottomless rectangles} (or {\em vertical half-strips}), a special case of axis-parallel rectangles. A {\bf bottomless rectangle} is the set of points $\{(x,y)| a<x<b,y<c\}$ for some $a,b$ and $c$. The set of all bottomless rectangles is denoted by $\cal B$. For our coloring purposes the family of bottomless
rectangles is equivalent with the family of (ordinary) axis-parallel rectangles having their lower edge on a common horizontal {\em base-line} $e$. 
The two main results are the following.

\begin{thmnoname}(Theorem \ref{dualbrect}) Any finite family of bottomless rectangles can be colored with $3$ colors such that any point contained by at least $2$ of them is not monochromatic.
\end{thmnoname}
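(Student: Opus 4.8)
The plan is to reduce the two–dimensional statement to a one–dimensional coloring condition and then satisfy it by an incremental, top–to–bottom sweep. First I would observe that, after a harmless perturbation making all tops distinct, the constraint attached to a vertical line $x=x_0$ has a very simple form. The rectangles that can cover points of this line are exactly those whose span $(a,b)$ contains $x_0$; order them by decreasing top as $R^{(1)},R^{(2)},\dots$ with $c^{(1)}>c^{(2)}>\cdots$. A point $(x_0,y)$ is covered precisely by the prefix $\{R^{(1)},\dots,R^{(j)}\}$ with $c^{(j)}>y\ge c^{(j+1)}$, and these prefixes are nested. Hence the tightest ``not monochromatic'' requirement along the line is the smallest prefix of size at least two, namely $\{R^{(1)},R^{(2)}\}$: once these two differ, every larger prefix automatically contains two colors. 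So a $3$-coloring is $2$-proper \emph{if and only if}, for every $x_0$, the two rectangles with the highest tops among those whose span contains $x_0$ receive different colors. This is now a purely one–dimensional condition on the spans together with their heights.

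Next I would insert the rectangles one at a time in order of \emph{decreasing} top, maintaining a $3$-coloring. When $R$ is inserted it has the lowest top so far, so at every $x_0\in(a_R,b_R)$ it lies below all previously inserted span–containers. Writing $d(x_0)$ for the number of previously inserted spans containing $x_0$, the top two at $x_0$ are already fixed and already distinct when $d(x_0)\ge 2$; there is no constraint when $d(x_0)=0$; and the only new constraints occur where $d(x_0)=1$, where $R$ must differ from the unique previously inserted rectangle covering $x_0$. Call that rectangle the \emph{exposed} one there, and read off, left to right, the \emph{exposed sequence} of rectangles owning the maximal depth-$1$ intervals. Inserting $R$ buries (turns depth $1\to 2$) each exposed rectangle lying over a sub–interval of $(a_R,b_R)$ and makes $R$ itself exposed over the previously depth-$0$ parts of its span. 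Consequently the coloring stays valid as long as, at each insertion, the exposed rectangles met inside $(a_R,b_R)$ — which form one contiguous block of the exposed sequence — use at most two colors, leaving a third color free for $R$.

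Thus it suffices to maintain the invariant that the colors along the exposed sequence are, locally, confined to two colors, so that every contiguous block — and hence every forbidden set faced by an inserted rectangle — omits a color. Establishing the maintainability of this invariant is the crux. The dangerous event is a \emph{merge}: a newly inserted $R$ whose span bridges two exposed blocks currently separated by a depth-$0$ gap. If those blocks carry different colors, $R$'s forbidden set already uses two colors, $R$ is forced onto the third, and $R$ then becomes exposed between them, so the run $\dots,c_1,c_3,c_2,\dots$ displays three colors inside a single interval and blocks any later span covering it. I would address this by choosing colors with foresight and, where forced, performing a \emph{local} recoloring of the exposed sequence at the moment of the merge, realigning the two facing ends to a common color before placing $R$. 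The principal obstacle is that such recoloring is itself constrained — a currently exposed block already differs from the lower rectangle that once buried the region and from the still deeper rectangles placed beneath it — so one must show these constraints never cascade beyond control. I expect the resolution to be a case analysis of how the inserted span meets the current exposed blocks (fully covering a block, partially eating into it, or bridging a gap), verifying in each case that two colors can be restored on the new exposed sequence while a third remains available for $R$; carrying out that case analysis cleanly is where the real work lies.
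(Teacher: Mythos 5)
Your reduction to the one--dimensional condition (only the two highest rectangles over each $x_0$ matter) and your sweep in decreasing order of top edges reproduce exactly the skeleton of the paper's proof: the paper also inserts rectangles top--down, observes that the newly inserted rectangle only conflicts with rectangles that are the \emph{unique} one over some base-line point of its span, and maintains a two-color invariant on what you call the exposed sequence (its induction hypothesis IH \ref{ih4} is that no base-line point covered by exactly one rectangle is red, i.e.\ the exposed sequence omits one \emph{fixed} color). But your proposal stops precisely where the real proof begins: you concede that maintaining the invariant under merges ``is where the real work lies'' and you do not carry out that step, so what you have is a correct reformulation of the problem, not a proof. A smaller point: the invariant as you state it (``locally confined to two colors'') is too weak. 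Since the forbidden set faced by an inserted rectangle is an \emph{arbitrary} contiguous block of the exposed sequence, you need the whole sequence to omit a color, and to make the bookkeeping work you want that omitted color to be the same fixed color at every step, which is what the paper enforces.

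The missing idea --- the one that dissolves the ``cascading constraints'' you worry about --- is the paper's divide-and-color lemma (Lemma \ref{recolorclaim}): if a base-line point $q$ is covered by exactly one rectangle, of color $c$, then transposing the other two colors on all rectangles lying strictly to the left (or strictly to the right) of $q$ preserves $2$-properness. This holds because a point strictly left of $q$ can have at most one covering rectangle that is not strictly left of $q$ (such a rectangle's span would contain $q$, and $q$ is $1$-covered), and that rectangle has color $c$, which the transposition fixes; hence every point's color multiset is transformed by a bijection, and points right of $q$ are untouched. With this lemma, recoloring never requires re-checking any depth-$\ge 2$ constraint, and the insertion step becomes short: color the new lowest rectangle $B$ red (legal, since by IH \ref{ih4} every $1$-covered base-line point was non-red before, so no point becomes covered by two red rectangles); then, on each side of a point covered only by $B$, use at most two such transpositions --- the paper's three easy cases according to the color of the exposed interval nearest to $B$ --- to eliminate all green exposed intervals; finally swap red and green globally to restore the invariant. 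Your plan would become a proof once you isolate and prove this lemma (or an equivalent mechanism allowing recoloring by color bijections on half-lines); without it, the case analysis you anticipate has no tool to work with.
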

\begin{thmnoname}(Theorem \ref{dualbrect2})
Any finite family of bottomless rectangles can be colored with $2$ colors such that any point contained by at least $3$ of them is not monochromatic.
\end{thmnoname}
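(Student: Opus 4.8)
The plan is to first pass to an equivalent one-dimensional picture. A bottomless rectangle is determined by its $x$-interval $(a,b)$ together with the height $c$ of its top edge, and a point $p=(x_0,y_0)$ lies in it exactly when $x_0\in(a,b)$ and $y_0<c$. Fixing the vertical line $x=x_0$ and letting $y_0$ decrease, the set of rectangles containing $p$ grows monotonically: it is always the collection of rectangles stabbed by the line whose top edge lies above $y_0$, that is, the $m$ \emph{highest} stabbed rectangles for some $m$. Hence on a fixed line the hyperedges of $H^*$ form a nested chain, so a $2$-coloring is $3$-proper iff on every line the smallest chain member of size at least $3$ is non-monochromatic; by nestedness this is the single condition that on every vertical line the three rectangles with the highest top edges among those stabbing it are not all one color. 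I would thus reduce the theorem to: $2$-color a finite family of $x$-intervals carrying distinct heights so that for every $x_0$ the three highest intervals over $x_0$ are non-monochromatic.

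Next I would build the coloring by processing the rectangles in order of \emph{decreasing} height and inserting each underneath those already placed. This order is convenient because the top three over a line is finalized as soon as three rectangles cover it: once three taller rectangles sit over $x_0$, no later (shorter) one can enter its top three. Consequently, inserting the current shortest rectangle $R$ with interval $I$ creates exactly one batch of constraints, at those $x_0\in I$ over which \emph{precisely two} already-placed rectangles sit; there the new top three is $\{L_1(x_0),L_2(x_0),R\}$ and must be non-monochromatic. The only freedom is the color of $R$: if over $I$ every monochromatic exposed pair $\{L_1,L_2\}$ shares one color, give $R$ the other; if all such pairs are bichromatic, $R$ is free.

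The hard part will be guaranteeing that no \emph{clash} occurs, i.e. that $I$ never simultaneously sees a red--red exposed pair and a blue--blue one, which would force $R$ to be both colors at once. The two naive invariants both fail: keeping every exposed pair bichromatic is exactly the depth-$2$ demand, which two colors cannot meet (this is the obstruction that forces a third color at depth $2$, in contrast with Theorem~\ref{dualbrect}); and the weaker invariant ``confine all monochromatic exposed pairs to one color'' is not maintainable, since inserting $R$ turns depth-one regions into new exposed pairs (pushing its color one way) while turning depth-two regions into triples (pushing it the other way). The crux is therefore to find a strengthened inductive invariant tracking the coloring of the two topmost \emph{levels} finely enough to forbid a clash yet coarsely enough to survive insertion. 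Concretely I expect to run the induction as a divide-and-conquer on the tallest rectangle $R_1$: since $R_1$ is the top level over its whole interval as one contiguous guard, outside its slab the remaining rectangles must be colored validly, while inside its slab they need only satisfy the weaker requirement that their top two is never monochromatic in the color opposite to $R_1$. Reconciling these two demands across the slab boundary, and showing that the monochromatic exposed pairs evolve in a laminar, staircase-like fashion along the $x$-axis so that opposite-colored pairs are never co-covered, is the technical heart of the argument and the step I expect to be the main obstacle.
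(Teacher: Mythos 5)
Your opening reduction is sound: along any vertical line the containing sets are nested, so $3$-properness is equivalent to demanding that, over every point of the base-line covered by at least three rectangles, the three rectangles with the highest top edges are not monochromatic. But the proposal stops exactly where the proof has to begin. You set up the greedy decreasing-height insertion, correctly observe that it can face a \emph{clash} (the new rectangle's interval seeing both a red--red and a blue--blue exposed pair), correctly note that the two naive invariants fail, and then state that the strengthened invariant which would rule out clashes is ``the technical heart'' and ``the main obstacle'' you expect to meet. That invariant is the entire content of the theorem; what you have is a correct setup plus an accurate diagnosis of the difficulty, not a proof. The closing sketch does not fill the gap: ``divide-and-conquer on the tallest rectangle $R_1$'' with a weakened requirement inside its slab, and a hoped-for ``laminar, staircase-like'' evolution of monochromatic pairs, is a guess with no argument attached, and it pivots on the wrong rectangle besides (see below).

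For comparison, the paper's proof of Theorem \ref{dualbrect2} abandons insertion order entirely and runs an induction on the number of rectangles with an auxiliary invariant restricted to the \emph{base-line}: every base-line point covered by \emph{exactly two} rectangles sees both colors. This is weaker than the unachievable global depth-$2$ demand you rightly dismiss, yet it is maintainable and suffices. Whenever some base-line point $q$ has depth $0$, $1$ or $2$ with rectangles strictly on both sides, the family is split at $q$; each side (together with the at most two rectangles over $q$) is colored by induction, and the base-line invariant at $q$ forces those shared rectangles to receive \emph{opposite} colors, so the two partial colorings can be reconciled by a global color swap --- this is how the clash you fear is prevented. If no such split exists, the base-line points of depth $1$ or $2$ are confined to the two extreme intervals on each end, and the proof removes the one or two rectangles with the \emph{lowest} top edges, colors the rest by induction, and assigns colors to the removed rectangles by hand in five sub-cases. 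The lowest rectangle (not the tallest) is the right one to treat specially, because every point inside it has the same covering set as its base-line projection, which is precisely what lets an invariant stated only on the base-line control all points of the plane. Finding that restricted, maintainable invariant is the missing idea in your proposal.
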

At the end of the section we deduce results for another very similar special case, the set of rectangles {\em intersecting} a common horizontal base-line $e$ (denoted by $\cal B'$).

In Section \ref {sechp} we prove theorems which give an almost complete answer for {\em half-planes} (the set of all half-planes is denoted by $\cal H$). The two main results are the following.
\begin{thmnoname}(Theorem \ref{hptheorem2}) Any finite set of points can be colored with $2$ colors such that any half-plane containing at least $3$ of them is not monochromatic.
\end{thmnoname}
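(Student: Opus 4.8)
The plan is to reduce the statement to a purely combinatorial coloring condition and then build the coloring from the convex-hull structure of the point set.

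\emph{Reduction.} First I would argue that it suffices to find a $2$-coloring of $P$ in which, for every direction, the three points of $P$ most extreme in that direction are not all of the same color. Indeed, let $h$ be a half-plane containing at least $3$ points of $P$ and let $\theta$ be its inner normal, so $h=\{x:\langle x,\theta\rangle\ge t\}$. Any point whose projection onto $\theta$ is at least that of some point of $h$ also lies in $h$; since at least $3$ points have projection $\ge t$, the three points of $P$ with largest projection onto $\theta$ all belong to $h$. If these three are not monochromatic, neither is $h$. Thus the theorem follows once every ``top-three'' triple — equivalently, every triple cut off from $P$ by a half-plane, i.e. every $3$-set of $P$ — is guaranteed to be bichromatic.

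\emph{Convex position.} The model case is $P$ in convex position. Then a half-plane cuts off a contiguous arc of the cyclic order $v_1,\dots,v_n$ of $P$ around its convex hull, so the $3$-sets are exactly the triples $\{v_i,v_{i+1},v_{i+2}\}$ of cyclically consecutive points. Hence it is enough to $2$-color the cycle so that every maximal monochromatic run has length at most $2$; writing $n$ as a sum of an even number of $1$'s and $2$'s (possible for every $n\ge 2$, since the interval $[n/2,n]$ contains an even integer) yields such a coloring, and triples are vacuous for $n\le 2$.

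\emph{General case and the main obstacle.} For general $P$ I would process the points from the outside in, peeling convex-hull vertices (or convex layers) and extending the coloring one vertex at a time. One direction of the induction is clean: a $3$-set of $P$ avoiding the newly added vertex $v$ is still a $3$-set of $P\setminus\{v\}$ (the separating half-plane is unchanged), so by induction it is already bichromatic; only triples $\{v,a,b\}$ remain, and such a triple is dangerous only when $a$ and $b$ already share a color. The difficulty is concentrated here: I must color $v$ so as to meet all these demands at once, which is possible precisely when the dangerous pairs do not force both a red pair and a blue pair. In convex position the pairs partnered with $v$ lie in a single arc and behave like an interval, keeping the demands consistent; for interior points this fails. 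As the five vertices of a regular pentagon together with its center show, an interior point can be among the three most extreme points for several \emph{disjoint} ranges of directions, so the $3$-sets do not form a single cyclic interval system and a purely local choice can meet conflicting requirements. Overcoming this — by choosing the peeled vertex carefully, strengthening the inductive invariant to control the colors near $v$, or arguing directly that the resulting ``$3$-fold circular cover'' of direction space is always $2$-colorable — is the heart of the proof and the step I expect to be the main obstacle.
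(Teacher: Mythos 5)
Your reduction (via the extremal-triple observation, which is the paper's monotonicity argument, Observation \ref{monotone}) and your treatment of points in convex position are both correct, and the convex case indeed matches the structure the paper exploits through Lemma \ref{hplemma}. But the proposal stops exactly where the actual proof has to begin: you explicitly leave the general case unresolved, observing that for an interior point the dangerous pairs do not form an interval system and that a local choice of color can face contradictory demands, and you label overcoming this as ``the main obstacle.'' That is a genuine gap --- as written, nothing is proved beyond convex position. Moreover, the incremental peeling framework you sketch is probably not the right vehicle: there is no evident inductive invariant on $P\setminus\{v\}$ that controls which already-colored pairs can later appear with $v$ in a cut-off triple, and the paper does not attempt any such induction.

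The paper instead gives a direct global construction that dissolves the difficulty. Color every point interior to the convex hull blue. Writing the hull vertices as $q_0,\dots,q_{k-1}$ in clockwise order, consider for each $q_i$ the ear triangle $T_i=q_{i-1}q_iq_{i+1}\Delta$: if $T_i$ contains a point of $P$, color $q_i$ red. The red vertices cut the remaining hull vertices into chains, each colored alternately (a chain of size one is colored blue; if every $T_i$ is empty, alternate around the whole hull, allowing one adjacent red pair when $k$ is odd). This guarantees that no two consecutive hull vertices are blue, and the verification splits into two short cases for a half-plane $h$ with exactly $3$ points. If $h$ contains two or more (necessarily consecutive) hull vertices, it contains a red one; it also contains a blue one, either an interior point or, when $h$ contains three hull vertices and no interior point, because the middle vertex has an empty ear and therefore sits on an alternating chain next to a differently colored neighbor inside $h$. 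If instead $h$ contains a single hull vertex $q_i$ and two interior points, then the boundary of $h$ crosses the edges $q_{i-1}q_i$ and $q_iq_{i+1}$, so both interior points lie in $T_i$; hence $q_i$ is red while they are blue. This last implication --- nonemptiness of the ear $T_i$ forces $q_i$ red, coupling the hull coloring to the uniformly blue interior --- is precisely the idea missing from your attempt; with it, no bookkeeping over directions or incremental coloring is needed at all.
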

\begin{thmnoname}(Theorem \ref{halfplanes})
Any finite family of half-planes can be colored with $3$ colors such that any point contained by at least $2$ of them is not monochromatic.
\end{thmnoname}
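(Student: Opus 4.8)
The plan is to color the half-planes directly (they are the vertices of $H^*$) and to reduce the requirement ``every point of depth at least $2$ is polychromatic'' to a short list of local constraints. First I would assume general position (no vertical bounding line, no three lines concurrent; a vertical half-plane can be perturbed or assigned arbitrarily below) and split the family into the \emph{upper} half-planes $\mathcal{U}$, those containing the upward vertical direction and hence of the form $y\ge \ell(x)$, and the \emph{lower} half-planes $\mathcal{D}$, of the form $y\le \ell(x)$. This is the dual counterpart of the picture behind the primal statement (Theorem~\ref{hptheorem2}).

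The key is a nesting observation. Along a fixed vertical line $x=t$, the upper half-planes containing a point $(t,y)$ are exactly those whose bounding line lies weakly below $(t,y)$, and as $y$ grows these form an increasing chain; hence the two \emph{lowest} upper lines at $x=t$ belong to \emph{every} depth-$\ge 2$ collection of upper half-planes at abscissa $t$. So if a point $p$ is covered by at least two upper half-planes, it is covered by the two lowest upper lines at $x_p$, and symmetrically for lower half-planes. Writing $S_p$ for the half-planes containing $p$ and $u,d$ for its numbers of upper and lower members, this shows: if $u\ge 2$ then $S_p$ contains a bichromatic pair as soon as the two lowest upper lines differ at $x_p$; if $d\ge 2$ likewise; and the only surviving case with $|S_p|\ge2$ is $u=d=1$, i.e.\ $p$ is covered by exactly one upper and one lower half-plane. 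It therefore suffices to find a $3$-coloring satisfying (i) at every abscissa the two lowest upper lines get distinct colors; (ii) at every abscissa the two highest lower lines get distinct colors; (iii) every pair $(U,D)\in\mathcal{U}\times\mathcal{D}$ for which some point is covered by $U$ and $D$ and by no other half-plane is bichromatic. Note this reduction handles size-$3$ ``mixed'' hyperedges automatically, since any such edge has $u\ge2$ or $d\ge2$.

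I would then encode (i)--(iii) as one conflict graph $G$ on the $n$ half-planes. As $t$ sweeps $\mathbb{R}$, constraint (i) contributes exactly the pairs (lowest line, second-lowest line): the lowest line traces the lower envelope of $\mathcal{U}$, a convex chain whose pieces occur in slope order, and the second line traces the second level, so the edges $G_U$ are governed by the first two levels of the arrangement of $\mathcal{U}$; symmetrically (ii) yields $G_D$. Constraint (iii) yields cross edges between $\mathcal{U}$ and $\mathcal{D}$, which exist essentially where the lower envelope of $\mathcal{U}$ dips below the upper envelope of $\mathcal{D}$, so they couple the two envelopes along the overlap interval(s). The theorem becomes exactly $\chi(G)\le 3$.

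To prove $\chi(G)\le 3$ I would look for a vertex-elimination order of degeneracy $2$ and color greedily in reverse, since any $2$-degenerate graph is $3$-colorable. The natural candidate is a right-to-left sweep that repeatedly deletes the half-plane owning the current extreme envelope piece: within a single class, removing the line owning the rightmost envelope segment destroys only a bounded number of first-two-level adjacencies, and one checks it then has at most two surviving within-class neighbours. The main obstacle is that the three constraint types interact, because one half-plane may simultaneously carry within-class envelope edges and cross edges from (iii); so the heart of the argument is to choose the sweep and prove that the cross edges plus within-class edges incident to the deleted half-plane still total at most two at the moment of deletion. Controlling the cross edges---rather than the clean single-class envelope structure---is where the difficulty concentrates. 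If a clean degeneracy bound proves elusive, I would instead convert the sweep into an explicit incremental coloring that inserts half-planes in envelope order and recolors along the at-most-two active constraints, which would also furnish the efficient algorithm promised in the introduction.
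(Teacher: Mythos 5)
Your reduction is sound, and it is in fact the primal mirror of the paper's setup: the paper dualizes half-planes to oriented points and points to lines, and your ``two lowest upper lines at abscissa $t$'' correspond exactly to the paper's consecutive vertices on the lower hull of the north-directed points, while your constraint (iii) pairs correspond to its edges $p_iq_j$ witnessed by a line seeing exactly those two points. One can even check that your conflict graph $G$ has pleasant within-class structure: a line of $\mathcal{U}$ not on the lower envelope can be the second-lowest line only where the lowest one is one of its two ``bracketing'' envelope lines, so $G_U$ is a path on the envelope lines plus vertices of degree at most $2$, and similarly for $G_D$; moreover, a point covered by exactly one upper and one lower half-plane forces both witnesses onto their respective envelopes, so the cross edges of (iii) join envelope lines only.

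The genuine gap is that you never prove $\chi(G)\le 3$, and this is where the entire content of the theorem lies. You correctly sense that the cross edges are the crux, but your proposed remedy --- a right-to-left sweep giving a $2$-degeneracy order --- is only a plan, and as stated it points the wrong way: an envelope line can be adjacent to arbitrarily many inner lines of its own class \emph{and} to several cross edges, so deleting ``the owner of the current extreme envelope piece'' is deleting a potentially high-degree vertex, and the claim that it ``has at most two surviving neighbours'' has no justification without a structural lemma about how cross edges interleave. That missing lemma is precisely what the paper supplies: drawing the two envelope sequences as two parallel paths (the $q$-path reversed), no two cross edges can cross --- shown by letting the two witness lines $h_e,h_f$ cut the plane into four regions and deriving a contradiction from $e_p^x<f_p^x$ and $e_q^x<f_q^x$ (Figure \ref{fighpproof3}). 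This makes the graph on envelope lines outerplanar, hence $3$-colorable, after which the inner lines (degree at most $2$ in $G$) are colored greedily; that final greedy step is the only part of your sketch that survives unchanged. Without an argument of this non-crossing type (or some other proof that $G$ is $2$-degenerate), the proposal does not establish the theorem; note also that $3$ colors are genuinely necessary here, so nothing cruder than this planarity-type structure can be expected to work.
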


\begin{table}[t]
\begin{center}
\setlength{\baselineskip}{5cm}
\begin{tabular}{|c|c|c|c|c|c|c|c|c|c|}
\hline
\raisebox{-0.1cm}[0.3cm][0.3cm]{$k$}           & ~~~\raisebox{-0.1cm}{2}~~~ & \raisebox{-0.1cm}{3} & ~~\raisebox{-0.1cm}{$\geq$4}~~ \\
\hline
\hline
\raisebox{-0.1cm}[0.3cm][0.3cm]{$\chi_k({\cal B})$}     &  \raisebox{-0.1cm}{\em 3} \raisebox{-0.1cm}{(Claim \ref{folklore})}& \raisebox{-0.1cm}{{\bf 3} (Claim \ref{wcfbthm})} & \raisebox{-0.1cm}{\bf 2} \raisebox{-0.1cm}{(Thm \ref{wcfbthm2})} \\
\hline
\raisebox{-0.1cm}[0.3cm][0.3cm]{$\ol{\chi_k}({\cal B})$}     & \raisebox{-0.1cm}{\bf 3} \raisebox{-0.1cm}{(Thm \ref{dualbrect})} & \raisebox{-0.1cm}{\bf 2} \raisebox{-0.1cm}{(Thm \ref{dualbrect2})}& \raisebox{-0.1cm}{\em 2} \\
\hline

\raisebox{-0.1cm}[0.3cm][0.3cm]{$\chi_k({\cal H})$}    &  \raisebox{-0.1cm}{\bf 4} \raisebox{-0.1cm}{(Thm \ref{hptheorem})}& \raisebox{-0.1cm}{\bf 2} \raisebox{-0.1cm}{(Thm \ref{hptheorem2})}& \raisebox{-0.1cm}{\em 2} \\
\hline
\raisebox{-0.1cm}[0.3cm][0.3cm]{$\ol{\chi_k}({\cal H})$}  & \raisebox{-0.1cm}{\bf 3} \raisebox{-0.1cm}{(Thm \ref{halfplanes})}& \raisebox{-0.1cm}{\em 2 or 3} & \raisebox{-0.1cm}{\bf 2}\raisebox{-0.1cm}{(Thm \ref{halfplanes2})}\\

\hline

\end{tabular}
\caption{table of results about $\cal B$ and $\cal H$} \vspace{-0.4cm}
\label{table:results}
\end{center}
\end{table}

\begin{table}[t]
\begin{center}
\setlength{\baselineskip}{5cm}
\begin{tabular}{|c|c|c|c|c|c|c|c|c|c|}
\hline
\raisebox{-0.1cm}[0.3cm][0.3cm]{$k$}           & ~~~\raisebox{-0.1cm}{2}~~~ & \raisebox{-0.1cm}{3$\ldots$6} & ~~\raisebox{-0.1cm}{$\geq$7}~~  \\
\hline
\hline

\raisebox{-0.1cm}[0.3cm][0.3cm]{$\chi_k({\cal B'})$}     & \raisebox{-0.1cm}{\bf 3$\leq$} \raisebox{-0.1cm}{(Claim \ref{bprimel});} \raisebox{-0.1cm}{\bf $\leq$6}\raisebox{-0.1cm}{(Claim \ref{bprime2})} & \raisebox{-0.1cm}{{\em2$\leq$}; {\em$\leq$}{\bf 3}} \raisebox{-0.1cm}{(Claim \ref{bprime3})} & \raisebox{-0.1cm}{\bf 2} \raisebox{-0.1cm}{(Claim \ref{bprime4})}\\
\hline
\raisebox{-0.1cm}[0.3cm][0.3cm]{$\ol{\chi_k}({\cal B'})$}     &  \raisebox{-0.1cm}{{\bf 4$\leq$}} \raisebox{-0.1cm}{(Claim \ref{dualbrectprime}); {\em $\leq$8} \cite{smorodnew}} & \multicolumn{2}{|c|}{{\raisebox{-0.1cm}{{\em 3$\leq$} \cite{pachtothtardos}; {\bf $\leq$4}}\raisebox{-0.1cm}{(Claim \ref{bprimed})}}
}  
\\
\hline
\end{tabular}
\caption{table of results about ${\cal B'}$} \vspace{-0.4cm}
\label{table:bottomless}
\end{center}
\end{table}
In Table \ref{table:results} and Table \ref{table:bottomless} we summarize these results (for the notations see Definition \ref{defchi}), the bold ones are proved in Sections \ref{secbottomless} and \ref{sechp}, others come from monotonicity on $k$ (see Observation \ref{order}) except for $\chi_2({\cal B})=3$ (folklore), $\ol{\chi_2}({\cal B'})\leq 8$ \cite{smorodnew} and that $\ol\chi_k({\cal B})\ge 3$ for every $k$ \cite{pachtothtardos}. 

For reasons of convenience, we introduce the following notations.
\begin{definition} \label{defchi}
Given a family $\cal F$ of planar regions and a finite set of points $P$,
\begin{itemize}
\item
$\chi_k({\cal F},n)=max_{P'\subset {P_{plane}};{|P'|=n}}\chi(H_{\ge k}(P',\cal F))$,
\item 
if $\cal F$ is infinite, $\chi_{k}({\cal F})=max_{n} \chi_k({\cal F},n)$, if it exists,
\item
$\ol{\chi_k}({\cal F},n)=max_{{\cal F'}\subset {\cal F};|{\cal F'}|= n}\chi(H_{\ge k}^*({P_{plane}},{\cal F'}))$,
\item
if $\cal F$ is infinite, $\ol{\chi_{k}}({\cal F})=max_{n}{\ol \chi_k} ({\cal F},n)$, if it exists.
\end{itemize}
\end{definition}
\begin{obs} \label{order}
$\chi_k({\cal F},n)\leq \chi_l({\cal F},n)$ and $\chi_k({\cal F})\leq \chi_l({\cal F})$, if $k\geq l$.
\end{obs}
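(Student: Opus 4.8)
The plan is to treat this as a pure monotonicity phenomenon driven by hyperedge inclusion, so that no geometric property of $\cal F$ is needed at all. First I would fix a finite point set $P'\subset P_{plane}$ and compare the two hypergraphs $H_{\ge k}(P',{\cal F})$ and $H_{\ge l}(P',{\cal F})$ for $k\ge l$. Both are defined on the same vertex set $P'$, and every hyperedge with at least $k$ points is in particular a hyperedge with at least $l$ points; hence the hyperedge set of $H_{\ge k}(P',{\cal F})$ is contained in that of $H_{\ge l}(P',{\cal F})$. In other words, $H_{\ge k}$ is a subhypergraph of $H_{\ge l}$ on the same vertices.

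The key (and essentially the only) step is then the general fact that deleting hyperedges cannot increase the chromatic number. Concretely, I would take an optimal proper coloring of $H_{\ge l}(P',{\cal F})$ using $\chi\bigl(H_{\ge l}(P',{\cal F})\bigr)$ colors. By definition no hyperedge of $H_{\ge l}$ on at least two vertices is monochromatic under this coloring, and since every hyperedge of $H_{\ge k}$ is also a hyperedge of $H_{\ge l}$, the same coloring leaves no hyperedge of $H_{\ge k}$ monochromatic. Thus it is already a proper coloring of $H_{\ge k}(P',{\cal F})$, which yields $\chi\bigl(H_{\ge k}(P',{\cal F})\bigr)\le \chi\bigl(H_{\ge l}(P',{\cal F})\bigr)$.

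Finally I would pass to the maxima of Definition \ref{defchi}. Taking the maximum over all $P'$ with $|P'|=n$ gives $\chi_k({\cal F},n)\le \chi_l({\cal F},n)$, and taking the maximum over $n$ then gives $\chi_k({\cal F})\le \chi_l({\cal F})$ whenever these suprema exist. The dual quantities $\ol{\chi_k}$ satisfy the same inequalities by running the identical argument on $H^*_{\ge k}$ instead of $H_{\ge k}$. I do not expect any genuine obstacle: the statement is a one-line subhypergraph comparison, and the only point deserving a moment's care is that the notion of \emph{proper} coloring already disregards hyperedges with fewer than two vertices, so the inclusion of colorings should be checked on the genuinely relevant ($\ge 2$-vertex) hyperedges, which is automatic once $k\ge l\ge 2$.
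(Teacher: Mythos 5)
Your proposal is correct and follows exactly the paper's argument: the hyperedge set of $H_{\ge k}(P',{\cal F})$ is contained in that of $H_{\ge l}(P',{\cal F})$ when $k\ge l$, so any proper coloring of the latter is proper for the former, and the inequality survives taking maxima over $P'$ and over $n$. You merely spell out the subhypergraph-monotonicity step that the paper leaves implicit, so there is nothing to add.
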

\begin{proof}
For any given $\cal F$ and $P'$, $\chi (H_{\ge k}(P',{\cal F}))\le \chi(H_{\ge l}(P',\cal F))$, because $H_{\ge k}(P',{\cal F})\subset H_{\ge l}(P',\cal F)$ by definition. Thus, the same inequality holds if we take the maximum over all point sets $P'$ of size $n$ and also if we take the maximum over all $n$.  
\end{proof}

Slightly modifying the notation of \cite{har} we call a family of regions $\cal F$ {\bf monotone} if, for any finite $P, F\in \cal F$ and $l$ positive integer, if $F$ contains at least $l$ points of $P$, then there exists $F'\in \cal F$ containing exactly $l$ points of $P$, all contained by $F$ as well. The following is an easy consequence of the definition.

\begin{obs} \label{monotone}
If $\cal F$ is a monotone family of regions, then for any finite $P$, $\chi(H_{\ge k}(P, {\cal F}))=\chi(H_k(P,\cal F))$, thus in the definition of $k$-proper coloring it is enough to restrict our condition to regions containing exactly $k$ points of the point set.
\end{obs}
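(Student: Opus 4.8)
The plan is to prove the two inequalities $\chi(H_k(P,\mathcal{F})) \le \chi(H_{\ge k}(P,\mathcal{F}))$ and $\chi(H_{\ge k}(P,\mathcal{F})) \le \chi(H_k(P,\mathcal{F}))$ separately. The first is immediate and needs no monotonicity: every hyperedge of $H_k(P,\mathcal{F})$ is, by definition, also a hyperedge of $H_{\ge k}(P,\mathcal{F})$, so $H_k \subseteq H_{\ge k}$ as hypergraphs on the common vertex set $P$. Hence any proper coloring of $H_{\ge k}$ restricts to a proper coloring of $H_k$, exactly as in the argument of Observation \ref{order}, which gives the first inequality.

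For the reverse inequality, which is where monotonicity enters, I would fix an optimal proper coloring $c$ of $H_k(P,\mathcal{F})$ and check that this same coloring $c$ is already proper for $H_{\ge k}(P,\mathcal{F})$. Take an arbitrary hyperedge $e$ of $H_{\ge k}$; by definition $e = F \cap P$ for some $F \in \mathcal{F}$ with $|e| \ge k$. Applying the monotonicity of $\mathcal{F}$ with $l = k$ to this $F$, I obtain a region $F' \in \mathcal{F}$ whose intersection $e' := F' \cap P$ has exactly $k$ points, all of which also lie in $F$; thus $e'$ is a hyperedge of $H_k$ and $e' \subseteq e$. Since $c$ is proper for $H_k$, the $k$-element edge $e'$ is not monochromatic, so it contains two points of distinct colors. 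These two points already belong to the larger set $e$, so $e$ is not monochromatic either. As $e$ was arbitrary, $c$ is a proper coloring of $H_{\ge k}$, yielding $\chi(H_{\ge k}) \le \chi(H_k)$ and hence the claimed equality.

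I do not expect any serious obstacle here: the only point requiring care is the correct invocation of the monotonicity definition, ensuring that the guaranteed $k$-point region $F'$ selects a subset of the points of $e$ rather than some unrelated $k$-set, so that non-monochromaticity genuinely transfers upward from $e'$ to $e$. The underlying principle is simply that \emph{being non-monochromatic is preserved under taking supersets}, so it suffices to certify each large edge by a small edge sitting inside it, and monotonicity is exactly what guarantees such a certifying $k$-edge exists. The final clause of the statement---that it is enough to impose the $k$-proper condition only on regions containing exactly $k$ points---is then merely a restatement of the established equality.
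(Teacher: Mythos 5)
Your proof is correct and is precisely the "easy consequence of the definition" that the paper alludes to without spelling out: the trivial inequality $\chi(H_k)\le\chi(H_{\ge k})$ from edge containment, plus the observation that monotonicity certifies every large edge by a $k$-element sub-edge, so non-monochromaticity transfers upward (implicitly using $k\ge 2$, as the paper does throughout). Nothing further is needed.
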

Note that monotonicity could be defined in the dual version as well, but none of the types of regions we study are monotone in that dual sense.

\subsection{Relation to conflict-free colorings}\label{cfsubsec}
Motivated by a frequency assignment problem in cellular telephone networks, Even, Lotker, Ron and Smorodinsky \cite{evenlotker} studied the following problem. 
Cellular networks facilitate communication between fixed {\em base stations} and moving {\em clients}. Fixed frequencies are assigned to base-stations to enable links to clients. Each client continuously scans frequencies in search of a base-station within its range with good reception. The fundamental problem of frequency assignment in cellular networks is to assign frequencies to base-stations such that every client is served by some base-station, i.e., it can communicate with a
station such that the frequency of that station is not assigned to any other station it could also communicate with (to avoid interference). Given a fixed set of base-stations we want to minimize the number of assigned frequencies.

First we assume that the ranges are determined by the clients, i.e., if a base-station is in the range of some client, then they can communicate.
Let $P$ be the set of base-stations and $\cal F$ the family of all possible ranges of any client.
Given some family $\cal F$ of
planar regions and a finite set of
points $P$
we define {\bf $cf({\cal F},P)$} as the smallest number of colors which are enough to color the points of $P$ such that in every region of $\cal F$ containing at least one point, there is a point whose color is unique among the points in that region. The maximum over all point sets of size $n$ is the so called {\bf conflict-free coloring number} (cf-coloring in short), denoted by {\bf $cf({\cal F},n)$} (for a summary of the definitions of the different versions of $cf$-colorings see Definition \ref{defcf}). Determining the cf-coloring number for different types of regions $\cal F$ is the main aim in this topic. 

We can define a dual version of conflict-free colorings as well. It is natural to assume that the ranges are determined by the base-stations, i.e., if a client is in the range of some base-station, then they can communicate. For a {\em finite} family of planar regions $\cal F$
we define {\bf $\ol{cf}(\cal F)$} as the smallest number of colors which
is enough for coloring the regions of $\cal F$ such that for every point in
$\cup\cal F$ there is a region whose color is unique among the colors of the regions containing it. For a (not necessarily finite) family $\cal F$ of planar regions let $\ol{cf}({\cal F},n)$, the {\bf conflict-free region-coloring number of $\cal F$} be the maximum of $\ol{cf}({\cal
F'})$ for $\cal F'\subseteq {\cal F}$, $|{\cal F'}|=n$.

Smorodinsky \cite{smorodthesis} and then Har-Peled et al. \cite{har} defined generalized versions of these notions. A {\em $cf_k$-coloring} of a point set is a coloring such that for each region $F$ of $\cal F$ containing at least one point, there is a color which is assigned to at most $k$ points contained by $F$. Note that a $cf$-coloring is actually a $cf_1$-coloring. The region-coloring version is defined similarly.

\begin{definition} \label{defcf}
Given a family $\cal F$ of planar regions and a finite set of points $P$,
\begin{itemize}
\item
$cf_k({\cal F},P)=\min c$: $\exists$ $c$-coloring $f$ of $P$ s.t. $\forall F\in \cal F$ $\exists x$ s.t. $1\leq|\{p: p\in F, f(p)=x\}|\leq k$,
\item
$cf_k({\cal F},n)=\max_{|P|=n} cf_k({\cal F},P)$,
\item
if $\cal F$ is finite, $\ol {cf_k}({\cal F})=\min c$: $\exists$ $c$-coloring $f$ of $\cal F$ s.t. $\forall p\in \cup \cal F$ $\exists x$ s.t. $1\leq|\{F: p\in 
F, f(F)=x\}|\leq k$,
\item
$\ol {cf_k}({\cal F},n)=\max_{|{\cal F'}|=n, \cal F'\subset F} \ol{cf_k}({\cal F'})$.
\end{itemize}
\end{definition}

\begin{obs} \label{cfwcf}
For $k\ge 2$, $\chi_k({\cal F},n)\leq cf_{k-1}({\cal F},n)$.
\end{obs}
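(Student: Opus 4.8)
The plan is to show that any coloring witnessing $cf_{k-1}(\mathcal{F},P)$ is automatically a $k$-proper coloring of $H(P,\mathcal{F})$, for every fixed finite $P$, and then to pass to the maximum over all $P$ with $|P|=n$. So first I would fix a finite point set $P$ and an optimal $cf_{k-1}$-coloring $f$ of $P$, using $c:=cf_{k-1}(\mathcal{F},P)$ colors. By Definition \ref{defcf}, for every $F\in\mathcal{F}$ containing at least one point of $P$ there is a color $x$ with $1\le|\{p:p\in F,\ f(p)=x\}|\le k-1$; note that the hypothesis $k\ge 2$ guarantees $k-1\ge 1$, so this requirement is meaningful. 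The claim I want is that this very $f$ is a $k$-proper coloring, i.e.\ a proper coloring of $H_{\ge k}(P,\mathcal{F})$.

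To verify the claim, I would take an arbitrary hyperedge $e$ of $H_{\ge k}(P,\mathcal{F})$; by definition $e=P\cap F$ for some $F\in\mathcal{F}$ with $|e|\ge k$. Suppose toward a contradiction that $e$ is monochromatic under $f$, say every point of $e$ receives color $c_0$. Then inside $F$ the color $c_0$ is used $|e|\ge k$ times, while every other color is used $0$ times; consequently no color $x$ can satisfy $1\le|\{p:p\in F,\ f(p)=x\}|\le k-1$, contradicting the $cf_{k-1}$ property of $f$. Hence no hyperedge of $H_{\ge k}(P,\mathcal{F})$ is monochromatic, which proves the claim and therefore gives $\chi(H_{\ge k}(P,\mathcal{F}))\le c=cf_{k-1}(\mathcal{F},P)$.

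Finally I would take the maximum of both sides over all $P\subset P_{plane}$ with $|P|=n$, which yields $\chi_k(\mathcal{F},n)\le cf_{k-1}(\mathcal{F},n)$, as desired. I do not expect any genuine obstacle here, since the statement is a direct unpacking of the two definitions; the only points that need a moment's care are the numerical comparison $k-1<k$ (so that a color class of size at least $k$ can never serve as the conflict-free witness) and the hypothesis $k\ge 2$ that keeps $cf_{k-1}$ well defined.
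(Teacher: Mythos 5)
Your proof is correct and follows exactly the paper's route: the paper likewise observes that a $cf_{k-1}$-coloring is by definition a $k$-proper coloring (you merely spell out the short contradiction argument that the paper leaves implicit) and then takes the maximum over all point sets of size $n$. No differences worth noting.
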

\begin{proof}
For any given $\cal F$ and $P$ $\chi_k({\cal F},P)\le cf_{k-1}({\cal F},P)$,because a $cf_{k-1}$-coloring is also a $k$-proper coloring by definition. Thus, the same inequality holds if we take the maximum over all point sets $P$ of size $n$.  
\end{proof}

Even et al. \cite{evenlotker} presented a general algorithmic framework on conflict-free colorings, refined version of this approach appeared in \cite{har} (and later in \cite{smorodnew}) where they summarized it in a lemma showing essentially that the chromatic number yields a good upper bound to the conflict-free coloring number (they deal only with the case $k=2$). The algorithm giving a conflict-free coloring from proper colorings is the following.
In each step take a biggest color class in a proper coloring of the point set. After coloring it to a new color, delete it and do the same for the new (smaller) point set. 
This framework also works for $k>2$ as it is easy to see that taking in each step a color class of a $k$-proper coloring we get a $cf_{k-1}$-coloring.
The generalized version of the lemma stated in \cite{har} is as follows.

\begin{lemma} \label{har}
For any fixed $k\ge2$
\item[(i)]
if $\chi_k({\cal F},n)\leq c$ for some constant $c$, then $cf_{k-1}({\cal F},n)\leq \frac {\log n}{\log(c/(c-1))}=O(\log n)$,
\item[(ii)]
if $\chi_k({\cal F},n)=O( n^{\epsilon})$ for some $\epsilon>0$, then $cf_{k-1}({\cal F},n)=O( n^{\epsilon})$.
\end{lemma}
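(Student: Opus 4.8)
The plan is to analyze the greedy algorithm described just before the statement, separating its correctness from its color count. Set $P_1=P$, and for $i\ge 1$ choose a $k$-proper coloring $f_i$ of $P_i$ using at most $\chi_k({\cal F},|P_i|)$ colors (one exists by the definition of $\chi_k$), let $C_i$ be a largest color class of $f_i$, assign the fresh color $i$ to every point of $C_i$, and recurse on $P_{i+1}=P_i\setminus C_i$. Since $|P_{i+1}|<|P_i|$ the process halts after some number $t$ of steps and produces a coloring $f$ of $P$ using $t$ colors. Everything then reduces to two independent tasks: showing that $f$ is a $cf_{k-1}$-coloring, and bounding $t$ in each of the two regimes.

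Correctness is the conceptual core. Fix a region $F\in {\cal F}$ containing at least one point and let $x$ be the largest color that $f$ assigns to a point of $F$. Since no point of $F$ survives past step $x$, we have $F\cap P_{x+1}=\emptyset$, so $F\cap P_x\subseteq C_x$ and thus $F\cap P_x$ is exactly the set $\{p\in F:f(p)=x\}$ of points of $F$ receiving color $x$; in particular it is monochromatic under the $k$-proper coloring $f_x$ of $P_x$. If we had $|F\cap P_x|\ge k$, then $F\cap P_x$ would be a hyperedge of $H_{\ge k}(P_x,{\cal F})$ and so could not be monochromatic under $f_x$, a contradiction. As $x$ actually occurs in $F$, this gives $1\le |\{p\in F:f(p)=x\}|=|F\cap P_x|\le k-1$, which is precisely the $cf_{k-1}$ condition for $F$.

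For the count in (i), the hypothesis $\chi_k({\cal F},\cdot)\le c$ ensures each $f_i$ uses at most $c$ colors, so $|C_i|\ge |P_i|/c$ and hence $|P_{i+1}|\le \frac{c-1}{c}|P_i|$. Iterating yields $|P_{i}|\le \left(\frac{c-1}{c}\right)^{i-1} n$, which falls below $1$ once $i-1>\frac{\log n}{\log(c/(c-1))}$; this bounds $t$ and gives $cf_{k-1}({\cal F},n)\le \frac{\log n}{\log(c/(c-1))}=O(\log n)$.

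The remaining technical obstacle is the count in (ii). Writing $\chi_k({\cal F},m)\le a\,m^{\epsilon}$, each $f_i$ uses at most $a|P_i|^{\epsilon}$ colors, so $|C_i|\ge |P_i|/(a|P_i|^{\epsilon})=|P_i|^{1-\epsilon}/a$ and the sizes obey the recurrence $|P_{i+1}|\le |P_i|-|P_i|^{1-\epsilon}/a$. Unlike the geometric decay in (i), this shrinks slowly, and the main work is to show it still reaches $0$ within $O(n^{\epsilon})$ steps. I would establish this by comparison with the differential inequality $n'(i)=-n(i)^{1-\epsilon}/a$, whose solution $n(i)=(n^{\epsilon}-\epsilon i/a)^{1/\epsilon}$ vanishes at $i=a\,n^{\epsilon}/\epsilon$; making the comparison rigorous for the discrete process (for instance by bounding $\sum_i |P_i|^{\epsilon-1}$ via the integral $\int n^{\epsilon-1}\,dn$, or by bounding the number of steps required to halve the point set and summing over a geometric sequence of such blocks) yields $t=O(n^{\epsilon})$ and completes the proof.
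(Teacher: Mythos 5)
Your proposal is correct and follows exactly the approach the paper intends: the iterated ``remove a largest color class of a $k$-proper coloring and give it a fresh color'' framework of Even et al.\ and Har-Peled--Smorodinsky, which the paper describes just before the lemma and whose details it defers to \cite{har}. Your write-up in fact supplies what the paper leaves implicit---the verification that the last color occurring in a region appears there between $1$ and $k-1$ times, the geometric-decay count for (i), and a workable discrete comparison (e.g.\ the halving-blocks summation) for (ii)---so it matches the paper's argument rather than diverging from it.
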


Observation \ref{cfwcf} and Lemma \ref{har} show that $\chi_k$ and $cf_{k-1}$ are usually
close to each other. Often the best known bound for $cf$ is obtained
using Lemma \ref{har}. This is the main motivation why we want to determine the chromatic number for different types of regions.
As this lemma gives bound for $cf_{k-1}$ using $\chi_k$, it motivates the investigation of $k$-proper colorings for $k>2$.
The dual version of Lemma \ref{har} holds as well.

We briefly mention the relation of proper colorings to cover decompositions. For more on this, see \cite{kbphd}.
Following the notation of \cite{pachtothtardos} a partial {\em $k$-fold} covering of the plane with a set of regions $\cal F$ is {\em decomposable} if we can partition the set into two subsets such that for any point covered by $\cal F$ at least $k$ times, there is a region in each part covering this point. Clearly, $\ol{\chi_k({\cal F})}=2$ is equivalent to this as a proper coloring of $\cal F$ gives a good partition and vice versa. \\

\section{Bottomless Rectangles} \label{secbottomless}
\subsection{Coloring points}

From now on we assume that there are no two points with the same $x$- or $y$-coordinate. It is easy to show that if this is not the case, then coloring the point set after a small perturbation gives a needed coloring for the original point set as well. Recall that a bottomless rectangle is the set of points $\{(x,y)| a<x<b,y<c\}$ for some $a,b$ and $c$ and that we can regard bottomless
rectangles as axis-parallel rectangles having their lower edge on a common horizontal {\em base-line} $e$. Indeed in the primal and dual case as well we can choose an $e$ low enough (below all the points when we color points, below all the top edges of any of the bottomless rectangles when we color bottomless rectangles) so that cutting all the bottomless rectangles by $e$ does not make any difference in the definitions of the colorings.
In this section {\em upwards order} means the ordering of points according to their $y$-coordinate starting with the point having the smallest $y$-coordinate (the {\em lowest} point). We refer to the $x$-coordinate left to right order as $x$-order. Given a point set $P$ and a $p\in P$, $p$'s {\em left neighbor} is the point in $P$ with the biggest $x$-coordinate which is smaller than $p$'s $x$-coordinate. Similarly we can define the {\em right neighbor} of $p$. Also, two neighboring points are called $x$-adjacent. For a point set $P$ a sequence of consecutive points of $P$ in the $x$-order is called an {\em $x$-interval}. 
 
The proof of the following, rather trivial result is just presented for the sake of completeness.
\begin{claim} \label{folklore}
{\sl (folklore)}
$\chi_2({\cal B})=3$, i.e., any finite set of points can be colored with $3$ colors such that any bottomless rectangle containing at least $2$ of them is not monochromatic.
\end{claim}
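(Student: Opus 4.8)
The plan is to reduce the statement to a graph-coloring problem and then treat the two bounds separately. Since the family ${\cal B}$ is monotone (given a bottomless rectangle with $\ge 2$ points, lower its top edge to capture exactly the two lowest), Observation~\ref{monotone} lets me replace ``containing at least $2$ points'' by ``containing exactly $2$ points''. Thus a $2$-proper coloring is exactly a proper vertex coloring of the graph $G$ on $P$ whose edges are the pairs $\{p,q\}$ forming a $2$-element hyperedge. My first task is to characterize these edges. Writing a bottomless rectangle as $\{a<x<b,\ y<c\}$ and assuming $x_p<x_q$, I may take $a$ just below $x_p$, $b$ just above $x_q$, and $c$ just above $\max(y_p,y_q)$; the resulting rectangle then contains $p,q$ together with exactly those points whose $x$-coordinate lies strictly between $x_p$ and $x_q$ and whose $y$-coordinate is below $\max(y_p,y_q)$. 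Hence $\{p,q\}\in E(G)$ precisely when no point of $P$ lies strictly between $p$ and $q$ in $x$-order while also lying below the higher of $p,q$.

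For the upper bound I would process the points in upwards order and color greedily with three colors. The key point is that when a point $q$ is added, the points already present are exactly those below $q$, and by the characterization above the only earlier points joined to $q$ are its current left neighbor and right neighbor in the $x$-order: a point sitting above $q$ cannot interfere, since it lies above the top edge of any witnessing rectangle. So each point has at most two already-colored neighbors, and one of the three colors is always free. This shows that $G$ is $2$-degenerate with respect to the upwards order, giving $\chi_2({\cal B})\le 3$.

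For the matching lower bound I would exhibit three points that are pairwise adjacent in $G$, i.e.\ a triangle, forcing three colors. The three points $A=(1,2)$, $B=(3,1)$, $C=(2,4)$ suffice: the pair $\{A,B\}$ is witnessed by a rectangle of height just above $2$, so $C$ (at height $4$) is excluded even though its $x$-coordinate lies between those of $A$ and $B$, while for $\{A,C\}$ and $\{B,C\}$ there is no point between the two endpoints in $x$-order at all. Thus all three pairs are $2$-element hyperedges, whence $\chi_2({\cal B})\ge 3$.

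The only genuinely delicate point is the edge characterization in the first paragraph, and in particular the observation that points lying above the higher endpoint never obstruct a witnessing rectangle; this is exactly what guarantees that the greedy pass sees at most two constraints per vertex. Once this is stated carefully, everything else is routine bookkeeping in the upwards order, so I expect no real obstacle beyond formalizing this characterization.
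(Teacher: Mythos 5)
Your proposal is correct and takes essentially the same route as the paper: both color the points greedily in upwards order, using the fact that a newly considered point can only be constrained by its left and right $x$-neighbors among the already-processed (lower) points, and both get the lower bound from three points that are pairwise cut off by bottomless rectangles. Your packaging via monotonicity, the exact-two-point ``Delaunay'' graph, and $2$-degeneracy is just a reformulation of the paper's induction hypothesis (no two $x$-adjacent colored points share a color) and its highest-point argument, so there is nothing substantive to flag.
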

\begin{proof}
First we prove that $\chi_2({\cal B})\leq 3$. We want to color a point set $P$ with $3$ colors such that any bottomless rectangle containing at least $2$ points contains two differently colored points.
The proof is by induction on the number of points in $P$.
First we color the lowest point of $P$ arbitrarily with one of the three colors then we color the points one by one in upwards order. We always maintain the following induction hypothesis.

\begin{ih}\label{ih1}
Among the points $P'$ already colored there are no two $x$-adjacent points with the same color.
\end{ih}

This can be done as $p$ lies between (at most) two already colored points in the $x$-order by the assumption that no two $x$-coordinates are equal. Thus for $p$ one can choose a color different from the (at most) two neighbors of $p$ (in $P'$), so that no two $x$-adjacent points of $P'$ have the same color.

In this way any bottomless rectangle $B$ containing at least two points contains two differently colored points as well, i.e., this is a $2$-proper coloring. Indeed, suppose on the contrary that all the points in $B$ are the same color. Then in the step when we considered the highest point $p$ of $B$, there were two $x$-adjacent points in $P'$ with the same color, contradicting IH \ref{ih1}.

The lower bound $\chi_2({\cal B})\geq 3$ follows from the fact that for example the points with coordinates (0,0), (1,1) and (2,0) cannot be colored with 2 colors in a proper way, since any two of them can be cut off by a bottomless rectangle.
\end{proof}
\begin{claim} \label{wcfbthm}
$\chi_{3}({\cal B})=3$.
\end{claim}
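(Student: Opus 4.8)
The upper bound requires no new work: since $3\ge 2$, Observation~\ref{order} gives $\chi_3({\cal B})\le \chi_2({\cal B})$, and $\chi_2({\cal B})=3$ by Claim~\ref{folklore}. So the entire content of the claim is the lower bound $\chi_3({\cal B})\ge 3$, that is, the construction of a finite point set $P$ with $\chi(H_{\ge 3}(P,{\cal B}))\ge 3$.

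To set up the lower bound I would first simplify what has to be ruled out. Bottomless rectangles form a monotone family, so by Observation~\ref{monotone} it suffices to find a $P$ such that \emph{every} $2$-coloring of $P$ makes some bottomless rectangle containing \emph{exactly} $3$ points monochromatic. Next I would translate cuttability into the combinatorial language of the section: assuming distinct coordinates and listing the points in $x$-order, a triple can be cut off by a bottomless rectangle containing exactly those three points if and only if the three are the three lowest points of the $x$-interval they span. Two consequences guide the construction. First, every three \emph{consecutive} points in the $x$-order form a cuttable triple (their spanning interval contains only them), so any coloring we must defeat already has no three consecutive equally colored points. Second, the heights are exactly the freedom used to create the remaining, long-range cuttable triples: a triple $\{p_i,p_j,p_k\}$ with $i<j<k$ is cuttable precisely when every point whose $x$-index lies strictly between $i$ and $k$, other than $j$, is placed higher than all three. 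Thus the task becomes to design $P$ so that the $3$-uniform hypergraph of cuttable triples is not $2$-colorable.

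For the verification I would use the ``apex'' viewpoint obtained by reading the points in increasing height order: when a point $p$ is inserted it is higher than all previously placed points, and the only new cuttable triples containing $p$ are $p$ together with two consecutive neighbors among its four nearest already-placed neighbors in the $x$-order, so at most three triples per point. This keeps the relevant hypergraph local and the case analysis finite: a color is forbidden at $p$ only when one of these neighboring pairs is monochromatic, and $p$ is trapped exactly when its two nearest left neighbors share one color while its two nearest right neighbors share the other. Organizing the finitely many block-type colorings (those with no three consecutive equal colors) around these apex constraints is what I expect to make the argument close.

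The main obstacle is precisely the construction, together with the proof that no $2$-coloring survives. The cuttability condition is restrictive --- the points strictly between the two extreme ones must be raised above all three --- so one cannot simply realize an arbitrary non-$2$-colorable $3$-uniform hypergraph as cuttable triples. For instance $K_5^{(3)}$, which is not $2$-colorable, is \emph{not} realizable here, because two triples sharing both of their extreme $x$-positions impose contradictory height requirements on the points between them. Hence the point set must be tailored so that enough long-range cuttable triples coexist (consistently in height) to kill every block coloring, and the heart of the proof is certifying exactly this. I would build $P$ level by level in height, at each level inserting points whose nearest-neighbor pairs extend the not-all-equal constraints already present, and I expect the delicate part to be showing that the accumulated constraints are jointly unsatisfiable over two colors.
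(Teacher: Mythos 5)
Your upper bound is exactly the paper's (Observation~\ref{order} plus Claim~\ref{folklore}), and your reductions are sound: monotonicity lets you restrict to rectangles with exactly $3$ points, your characterization of cuttable triples (the three lowest points of the $x$-interval they span) is correct, and so is the ``apex'' observation that inserting points in height order creates at most three new cuttable triples per point, each formed with two $x$-consecutive previously placed points. But the proof stops precisely where, by your own admission, ``the entire content of the claim'' lies: no point set is ever constructed, and no argument is given that every $2$-coloring of it leaves a monochromatic cuttable triple. Phrases like ``I would build $P$ level by level'' and ``I expect the delicate part to be showing that the accumulated constraints are jointly unsatisfiable'' describe a plan, not a proof, so the lower bound $\chi_3({\cal B})>2$ remains unproven. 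This is a genuine gap, not a stylistic one.

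For comparison, the paper closes this gap with a small explicit configuration of $12$ points $p_1,\ldots,p_{12}$ (in $x$-order), whose heights are arranged so that: each of the three pairs $\{p_4,p_5\}$, $\{p_5,p_6\}$, $\{p_4,p_6\}$ forms a cuttable triple with \emph{any single} point of an associated block of three ($\{p_1,p_2,p_3\}$, $\{p_7,p_8,p_9\}$, $\{p_{10},p_{11},p_{12}\}$, respectively), and each block of three is itself cuttable. The verification is then a three-line pigeonhole: two of $p_4,p_5,p_6$ share a color, say red; every point of the associated block then lies in a cuttable triple with that red pair, forcing the whole block blue; but the block itself is cuttable and now monochromatic, a contradiction. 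Note that this construction is far simpler than the inductive, level-by-level scheme you sketch --- it needs no general theory of which $3$-uniform hypergraphs are realizable, only one hand-checkable arrangement --- so if you want to complete your argument, producing and verifying such a concrete configuration (your apex framework is a perfectly good tool for the verification) is the missing step.
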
 
\begin{proof}
Using Observation \ref{order} with Claim \ref{folklore} we know that $\chi_3({\cal B})\leq \chi_{2}({\cal B})=3$.
Thus, it is enough to prove that $\chi_{3}({\cal B})>2$. For that we show that the $12$ point construction in Figure \ref{bconstr} cannot be colored with $2$ colors such that any bottomless rectangle containing at least $3$ points contains two differently colored points. Suppose on the contrary that there is such a coloring. Denote the points ordered by their $x$-coordinate from left to right by $p_1,p_2,\ldots,p_{12}$. Among the points $p_4,p_5,p_6$ there are two with the same color, wlog. assume that this color is red. If $p_4$ and $p_5$ are red, then all of $p_1,p_2,p_3$ are blue as there is a bottomless rectangle containing only $p_4, p_5$ and any one of these $3$ points. This is a contradiction as there is a bottomless rectangle containing only these $3$ points, all blue. If $p_4$ and $p_6$ are red then similar argument for the points $p_{10},p_{11},p_{12}$, if $p_5$ and $p_6$ are red then similar argument for the points $p_7,p_8,p_9$ leads to a contradiction.
\begin{figure}
    \centering
    \subfigure [Claim \ref{wcfbthm}]{ \label{bconstr}
            {\epsfig{file=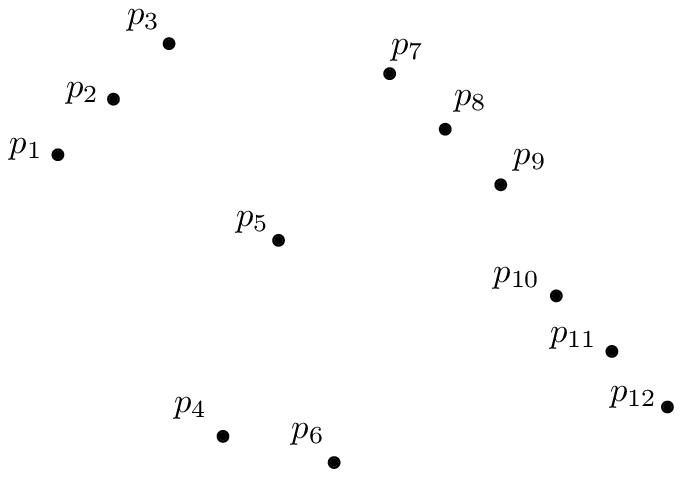,width=4cm,silent=}}}
        \hspace{15mm}
    \subfigure [Theorem \ref{dualbrect}]{ \label{bexample}
    			        {\epsfig{file=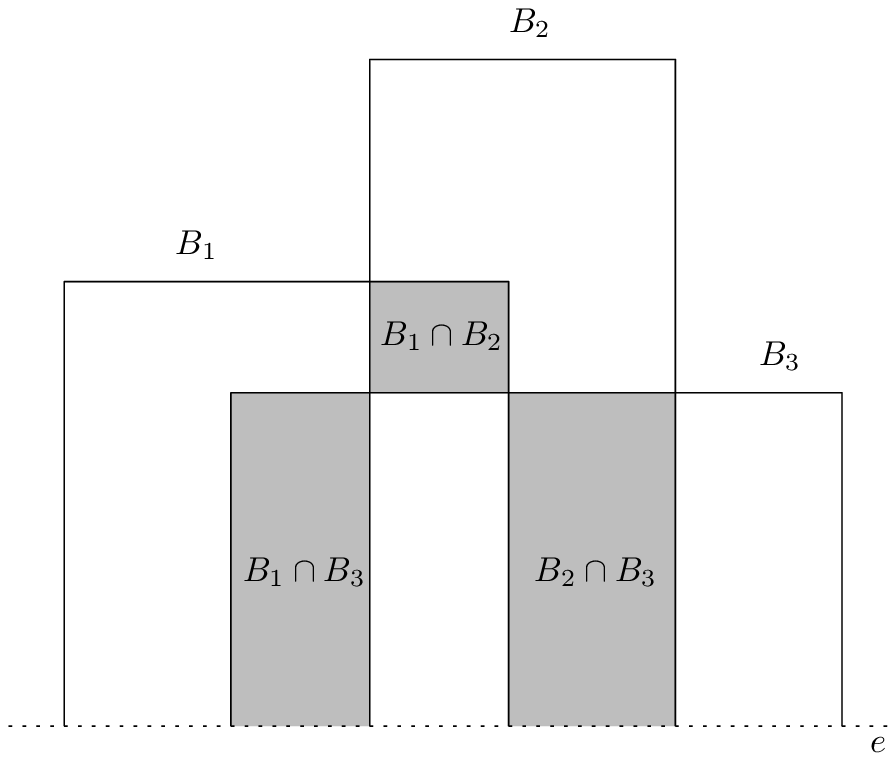,width=4cm,silent=}}}
        \hspace{15mm}
    \subfigure [Claim \ref{dualbrectprime}]{ \label{dualbrectprimefig}
    			        {\epsfig{file=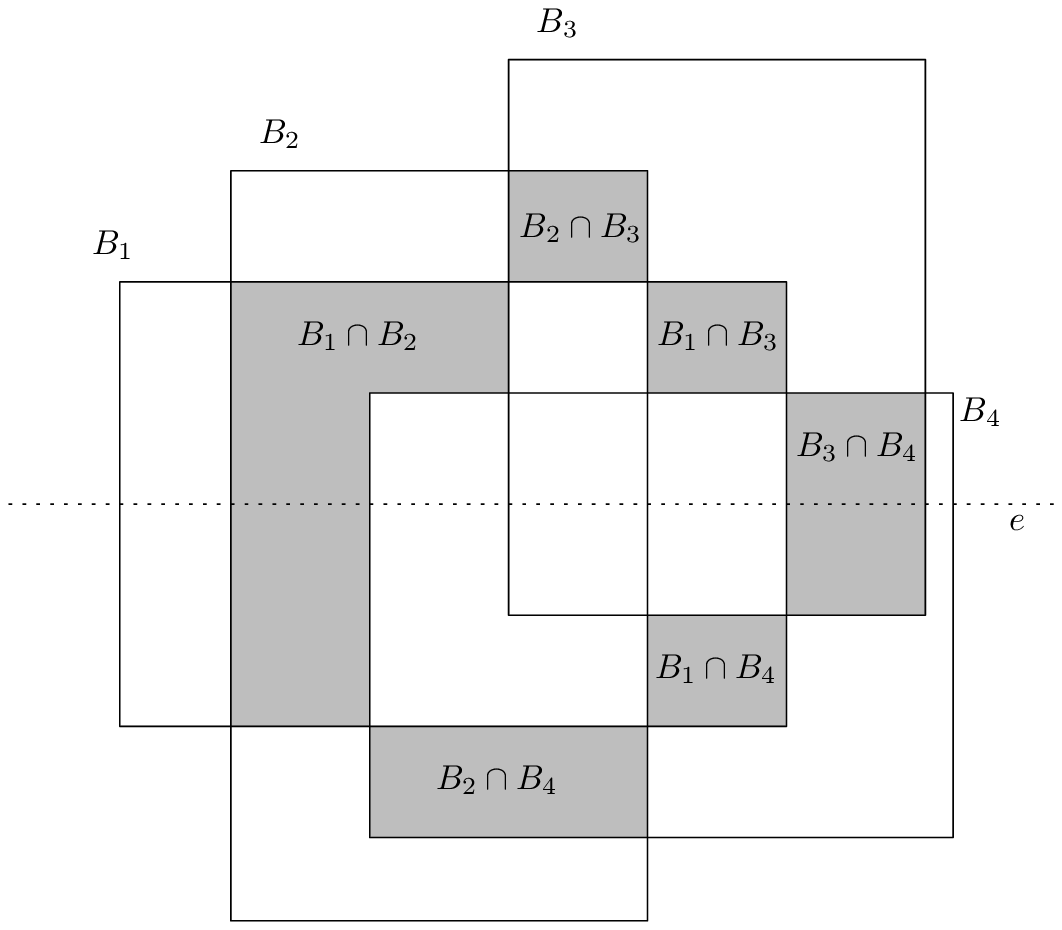,width=4cm,silent=}}}
    \caption {Lower bound constructions for bottomless rectangles}
    \label {figb}
\end{figure}
\end{proof}
The following theorem shows that the smallest $k$ for which $\chi_{k}({\cal B})=2$ is $4$ and so $\chi_{k}({\cal B})$ is determined for every $k$ as trivially $\chi_{k}({\cal B})\geq 2$ for any $k$. 
\begin{Theorem} \label{wcfbthm2}
$\chi_{4}({\cal B})=2$, i.e., any finite set of points can be colored with $2$ colors such that any bottomless rectangle containing at least $4$ of them is not monochromatic.
\end{Theorem}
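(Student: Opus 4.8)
The family $\mathcal{B}$ is monotone in the sense of the paragraph preceding Observation~\ref{monotone}: if a bottomless rectangle $\{a<x<b,\ y<c\}$ contains at least $\ell$ points, we may lower its top edge $c$ until exactly the $\ell$ lowest points of the strip survive, giving a bottomless rectangle with exactly $\ell$ points, all inside the original. Hence by Observation~\ref{monotone} it suffices to produce a $2$-coloring in which no bottomless rectangle containing \emph{exactly} $4$ points is monochromatic.

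The plan is to color the points one by one in upwards order, as in Claim~\ref{folklore}, maintaining the $x$-order of the already colored set $P'$. The first reduction is to show that at each step only the rectangles created by the new point matter. Let $p$ be the point just added. Since $p$ is the highest point of $P'$, any bottomless rectangle whose point set has exactly $4$ elements and whose highest point is $p$ captures, with its top edge placed just above $p$, \emph{all} current points of its vertical strip; such a rectangle therefore selects exactly $4$ consecutive points of $P'$ in the $x$-order, one of which is $p$. Conversely, in the final coloring any bottomless rectangle that is monochromatic on exactly $4$ points has a highest point $p^\ast$, and at the step when $p^\ast$ was inserted its four points were already present, below $p^\ast$, and consecutive in the $x$-order (no other current point lay in the strip beneath its top edge, else the final count would exceed $4$). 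Thus it is both necessary and sufficient to guarantee that \emph{at every step, after coloring the new point $p$, the maximal monochromatic run through $p$ in the $x$-order of $P'$ has length at most $3$}; equivalently, no monochromatic run of length $\ge 4$ is ever completed.

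It remains to exhibit a coloring rule achieving this, and here I would exploit that the problem is \emph{offline}: the entire point set, hence the full insertion order, is known in advance. When $p$ is inserted into the gap between its current left neighbor $\ell$ and right neighbor $r$, write $\lambda,\rho$ for the lengths of the maximal monochromatic runs ending at $\ell$ and starting at $r$. A short case analysis shows that, assuming no run of length $\ge 4$ is present, a color for $p$ keeping its own run of length $\le 3$ fails to exist in \emph{exactly} one situation: $\ell$ terminates a run of length $3$ of one color and $r$ begins a run of length $3$ of the opposite color (the pattern $R^3\,|\,B^3$ straddling the gap). Consequently the greedy step is always safe provided the current coloring never exhibits a length-$3$ run of each color meeting at a gap, and the whole argument reduces to choosing colors so that this forbidden meeting never arises.

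The main obstacle is precisely the maintenance of such an invariant. A naive run-length invariant, such as ``no run of length $\ge 3$'' or even ``no two adjacent length-$3$ runs,'' is \emph{not} maintainable: an insertion next to a length-$3$ run can be forced to lengthen a neighboring run (the alternative color would complete a run of $4$), thereby recreating the bad pattern one position over. The heart of the proof must therefore be a stronger, genuinely maintainable invariant on the run structure in the $x$-order — controlling not only run lengths but the lengths (and colors) of the neighboring runs, i.e.\ forbidding a carefully chosen finite list of color patterns — together with a verification, by a case analysis over the position of $p$ relative to the surrounding runs, that some color always simultaneously respects the invariant and keeps $p$'s run short. The offline freedom to have set earlier colors appropriately is what rescues the otherwise-stuck configurations. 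I expect the formulation and maintenance of this invariant to be the crux; the monotonicity reduction and the reduction to ``run through $p$ of length $\le 3$'' are routine.
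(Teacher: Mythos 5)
Your two reductions are sound: ${\cal B}$ is monotone, so by Observation~\ref{monotone} only rectangles containing exactly $4$ points matter, and since the point inserted at each step is the highest one considered so far, the point set of any such rectangle whose top point is $p$ is an $x$-interval of $P'$ through $p$; hence the theorem is equivalent to never completing a monochromatic run of four consecutive points during the upward insertion process. Your analysis of the greedy step is also correct: immediate coloring gets stuck exactly on the pattern $R^3\,|\,B^3$ straddling the insertion gap, and naive run-length invariants are not maintainable. But at precisely that point the proof stops: you assert that some ``stronger, genuinely maintainable invariant'' must exist and declare its formulation to be the crux, without producing it or verifying it can be maintained. That invariant \emph{is} the content of the theorem, so this is a genuine gap, not a routine omission.

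The paper's resolution is not a cleverer invariant for an immediate coloring; it changes the scheme so that your stuck configuration can never form. Points are considered in upwards order but are \emph{not} colored on arrival: a newly considered point is left uncolored unless it is $x$-adjacent to an uncolored point of $P'$, in which case the new point and that neighbor are colored simultaneously so as to maintain IH~\ref{ih2}, namely that the colored points of $P'$ alternate in color along the $x$-order and no two uncolored points are $x$-adjacent; points still uncolored when the process ends are colored arbitrarily. Under this invariant every $x$-interval of four points of $P'$ contains at least two colored points, which are adjacent in the subsequence of colored points and hence differently colored by alternation --- so no insertion can ever complete a monochromatic run of four, and the colors assigned during the process are never revoked. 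This lazy, paired coloring dissolves exactly the obstruction you identified: an insertion is never forced to extend an existing monochromatic run, because among colored points all runs have length one. If you wish to salvage your framework, the invariant you are looking for is ``colored points alternate, uncolored points are isolated,'' but note that it inherently requires allowing points to remain temporarily uncolored, which your immediate-greedy formulation excludes.
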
 
\begin{proof}      
We want to color the points red and blue such that any bottomless rectangle containing at least $4$ points contains two differently colored points. 
The proof is by induction. First we color the lowest point of $P$ arbitrarily then we consider the points in upwards order. We do not color every vertex as soon as it is considered but we always maintain the following induction hypothesis.

\begin{ih}\label{ih2}
In the $x$-coordinate order of the points $P'$ considered so far there are no two $x$-adjacent uncolored points and the colored points alternate in color (in the $x$-order).
\end{ih}

In a general step of the induction the next point in upwards order is considered. We keep it
uncolored unless it has an uncolored left or right neighbor in $P'$ (note that it cannot have both by IH \ref{ih2}). In that case we color the new point and its uncolored neighbor in a way that maintains the alternation. This way IH \ref{ih2} remains true. After we finish this process we arbitrarily color the points in $P$ that were left uncolored.
We need to prove that this coloring is a $4$-proper coloring. Consider a bottomless rectangle $B$ containing at least $4$ points. Let $p$ be the highest point contained by $B$. In the step when $p$ was considered $B\cap P$ was an interval of the points $P'$ considered so far. Any such interval of at least $4$ vertices contains both red and blue points as needed. Indeed, by IH \ref{ih2} any interval in the $x$-order with at least $4$ points has at least two colored points, thus by the alternation two differently colored points as well.
\end{proof}
\begin{claim} \label{wcfbthm3}
Colorings guaranteed by Claim \ref{folklore}, Claim \ref{wcfbthm} and Theorem \ref{wcfbthm2} can be found in $O(n\log n)$ time.
\end{claim}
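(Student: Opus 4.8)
The plan is to observe that all three colourings are produced by the same kind of incremental procedure: the points are processed in upwards order, and the colour assigned to the current point depends only on a constant number of its $x$-neighbours among the points considered so far. Hence the whole task reduces to supporting, for a dynamically growing set of points, the operations \emph{insert a point} and \emph{report the left and right $x$-neighbour of a given point}, together with a constant number of colour look-ups and updates per step.

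First I would sort the points of $P$ by their $y$-coordinate, producing the upwards order; this costs $O(n\log n)$ and is done once. I would then maintain a balanced binary search tree (or any ordered dictionary) keyed by the $x$-coordinate, storing with each point its current colour (possibly ``uncolored''), and supporting insertion and predecessor/successor queries in $O(\log n)$ time. Processing the points in upwards order, each step inserts the new point $p$ and retrieves its left neighbour $\ell$ and right neighbour $r$ in the tree, both in $O(\log n)$ time. Over all $n$ points this is $O(n\log n)$, so it remains only to check that the colouring decision at each step uses no more than $O(1)$ of this neighbour information and triggers no cascade of recolourings.

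For the colouring of Claim \ref{folklore} (which is also the colouring used for Claim \ref{wcfbthm}, since there the bound follows from $\chi_3({\cal B})\le\chi_2({\cal B})$) this is immediate: $p$ is given any of the three colours distinct from the colours of $\ell$ and $r$, a decision using only the two retrieved colours. For the two-colouring of Theorem \ref{wcfbthm2} the current point is left uncolored unless exactly one of $\ell,r$ is uncolored; say $\ell$ is uncolored, in which case I colour $p$ and $\ell$. By IH \ref{ih2} the left neighbour $\ell'$ of $\ell$ and the point $r$ are already coloured and, being consecutive coloured points before $p$'s insertion, carry opposite colours; hence the alternating assignment $\ell\mapsto\ol c,\ p\mapsto c$, where $c$ is the colour of $\ell'$, is automatically consistent with $r$ as well (the boundary cases in which $\ell'$ or $r$ is absent are fixed by the single available neighbour). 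Thus the update reads only $O(1)$ colours and recolours only the two points $\ell,p$, with no further propagation; at the very end the points still uncolored are coloured arbitrarily in $O(n)$ total.

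The main obstacle — and the only place a naive implementation could become superlinear — is precisely this locality claim for Theorem \ref{wcfbthm2}: one must be sure that fixing the alternation around the newly coloured pair never forces changes further along the $x$-order. The observation that any two coloured points consecutive in the current $x$-order already have opposite colours, a direct consequence of IH \ref{ih2}, is what rules out a cascade and keeps each step at $O(\log n)$. Summing over the $n$ insertions, together with the initial sort, gives the claimed $O(n\log n)$ bound for all three colourings.
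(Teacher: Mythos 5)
Your proposal is correct and follows essentially the same route as the paper's proof: sort the points into upwards order in $O(n\log n)$ time, perform each of the $n$ insertion steps in $O(\log n)$ time, and finish with a linear pass to color the remaining uncolored points in the algorithm of Theorem \ref{wcfbthm2}. You merely make explicit what the paper leaves implicit, namely the balanced search tree supporting neighbor queries and the observation (via IH \ref{ih2}) that coloring the pair $\ell,p$ never forces recoloring further along the $x$-order.
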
 
\begin{proof}
Computing the upwards order of the points takes $O(n\log n)$ time, the rest of the algorithm has $n$ steps in all cases, each computable in $O(\log n)$ time, in the algorithm of Theorem \ref{wcfbthm2} there is a final coloring step that takes at most linear time, so the whole algorithm runs in $O(n\log n)$ time in all cases.
\end{proof}

\subsection{Coloring bottomless rectangles}

In \cite{smorodnew} a very similar family is considered, namely, the family of axis-parallel rectangles intersecting a common base-line. The proof of their result with a slight modification gives $\ol{\chi_2}({\cal B})\leq 4$. The following theorems determine $\ol{\chi}_{k}({\cal B})$ for every $k$, also improving this bound to $3$ colors, which is optimal.

From now on we assume that there are no two bottomless rectangles with overlapping sides. It is easy to show that if this is not the case, then coloring the rectangles after perturbing them such that afterwards there are no overlappings, gives a needed coloring for the original family of rectangles as well.
We denote the projection of a point $p$ on $e$ by $p'$.

\begin{Theorem} \label{dualbrect}
$\ol{\chi_2}({\cal B})=3$, i.e., any finite family of bottomless rectangles can be colored with $3$ colors such that any point contained by at least $2$ of them is not monochromatic.
\end{Theorem}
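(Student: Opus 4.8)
The plan is to first convert the coloring condition into a purely combinatorial ``top-two'' constraint, and then build the coloring by a sweep from the top down. Fix the base-line $e$ and regard each bottomless rectangle $B_i$ as an open interval $I_i=(a_i,b_i)$ carrying a top value $c_i$. For a fixed abscissa $x$, as $y$ decreases the set of rectangles containing $(x,y)$ only grows, so the hyperedges met along the vertical line $\{x\}\times\mathbb{R}$ form a chain ordered by the $c_i$. Hence the only binding hyperedge on this line is the smallest one of size two: the two \emph{active} rectangles at $x$ (those with $x\in I_i$) that have the two largest top values. I would therefore state and prove the following equivalent form: a $3$-coloring is a valid $2$-proper coloring of the family \emph{if and only if}, for every $x$, the two active rectangles of largest top edge receive distinct colors.

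Next I would produce such a coloring inductively, inserting the rectangles in decreasing order of their top edge $c_i$ and maintaining at each stage a valid $3$-coloring of the processed rectangles together with their \emph{skyline} (the upper envelope of the processed intervals). The reason for this order is that a rectangle inserted later lies below everything already placed, so it can never change which already-placed rectangle is on top at any $x$; relative to the placed rectangles it is the lowest, hence in any binding top-two pair it is the lower member. A short case check then shows that, when $R$ is inserted, $R$ is forced to differ in color exactly from those placed rectangles that are the \emph{sole} coverer of some point of $I_R$: at a point of $I_R$ covered by two or more placed rectangles $R$ is only third from the top and imposes nothing. Inserting $R$ thus amounts to choosing for $R$ a color avoiding the colors of its sole-coverers over $I_R$, and then updating the skyline, on which $R$ appears over the parts of $I_R$ previously uncovered.

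The heart of the argument, and the step I expect to be the main obstacle, is to guarantee that a free color for $R$ always exists, i.e. that the sole-coverers over $I_R$ never span all three colors. This is not automatic: $I_R$ may meet many placed rectangles and contain several disjoint stretches of coverage-multiplicity one, so a priori the conflicting rectangles could carry three different colors. To forbid this I would strengthen the induction hypothesis into a structural invariant governing how colors are allowed to appear along the coverage-one part of the skyline --- for instance that consecutive sole-coverable arcs are themselves in conflict and are colored by a controlled two-color pattern, leaving the third color in reserve --- and then verify both that (a) the invariant always frees a color for the newly inserted $R$, and (b) the invariant is restored once $R$ joins the skyline. Pinning down an invariant for which (a) and (b) hold simultaneously, especially in the case where $R$'s newly exposed arcs interleave with several old skyline arcs, is the delicate point; this is exactly where the gap between $3$ and the $4$ colors obtainable by the cited method lives.

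Finally, for the matching lower bound $\ol{\chi_2}({\cal B})\ge 3$ I would exhibit a small family whose top-two conflict structure is not $2$-colorable, such as the configuration in Figure \ref{bexample}: two overlapping higher rectangles placed above a common wider lower rectangle already realize three pairwise top-two conflicts (the left one with the lower one where the right is absent, the right one with the lower one where the left is absent, and the two higher ones in their overlap), hence a triangle needing three colors. Combined with the sweep this yields $\ol{\chi_2}({\cal B})=3$, as stated in Theorem \ref{dualbrect}.
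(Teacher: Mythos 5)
Your reduction to the ``top-two'' constraint is correct (along a vertical line the hyperedges are nested, so it suffices that the two highest active rectangles at every abscissa differ in color), your insertion order and your identification of the constraints imposed by a newly inserted rectangle $R$ (it must avoid the colors of the rectangles that are \emph{sole} coverers somewhere over $I_R$) agree with the paper's setup, and your lower bound is the same triangle argument as in the paper. But the proof has a genuine gap exactly where you flag it, and the gap is not closable in the framework you set up. The natural invariant --- sole-covered arcs of the skyline never use a reserved color, say red --- does give you part (a) for free: the sole-coverers over $I_R$ then use at most two colors, so you may color $R$ red. However, part (b) then fails irreparably for a \emph{static} sweep: $R$ itself becomes the sole coverer on the newly exposed arcs of $I_R$ (the parts previously uncovered), and $R$ is red, so the invariant is destroyed by the very move that used it; no cleverer fixed choice of color for $R$ escapes this, since $R$ may simultaneously face sole-coverers of both non-reserved colors and expose new arcs.

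The paper closes this loop with an ingredient your outline lacks: it permits \emph{recoloring} the already-placed rectangles after each insertion. The key tool is Lemma \ref{recolorclaim} (``divide and color''): if a base-line point $q$ is covered by exactly one rectangle, then swapping the other two colors on all rectangles strictly to one side of $q$ preserves $2$-properness, because no binding pair straddles $q$. Using a constant number of such one-sided swaps anchored at a point covered only by the new rectangle $B$, the paper eliminates all green sole-covered base-line points, and then swaps red and green globally; the result again satisfies IH \ref{ih4} (no sole-covered base-line point is red) while $2$-properness is maintained throughout. Without this recoloring device --- or some substitute for it --- your induction cannot be restored after an insertion, so as written the proposal proves the lower bound but not the upper bound.
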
 

\begin{proof}
For the lower bound, the arrangement of $3$ rectangles $B_1,B_2,B_3$ on Figure \ref {bexample} shows that $3$ colors are sometimes needed. Indeed, for every pair of rectangles there is a point which is contained by only these two, thus all rectangles must have different colors in a $2$-proper coloring. 

For the upper bound, given a family of rectangles with a common base line $e$ we want to color the rectangles red, blue and green such that any point contained by at least $2$ rectangles is contained by two differently colored rectangles. The proof is by induction. We color the rectangles in downwards order according to their top edge's $y$-coordinate. During the process we may recolor already colored rectangles. We start with the empty family and reinsert the rectangles in this order.
We color the first, i.e., the highest rectangle blue. After each step we maintain the following induction hypotheses.
\begin{ih} \label {ih3}
Any point $p$ contained by at least $2$ rectangles is contained by two differently colored rectangles (it is a $2$-proper coloring).
\end{ih}
If a base-line point $q$ is contained by only $1$ rectangle then we say that the color of $q$ is the color of the rectangle containing it.
\begin{ih} \label {ih4}
If a point $q$ on the base-line $e$ is contained by exactly $1$ rectangle, then the color of $q$ is not red.
\end{ih}

In each step we insert the next rectangle $B$ in downwards order, so its top edge is below the top edge of all the rectangles already inserted. 
We color $B$ red (later $B$ might be recolored in order to maintain IH \ref{ih4}). We have to prove that IH \ref{ih3} remains valid for every point of the plane, i.e, this is again a $2$-proper coloring. 
Take an arbitrary point $p$, we prove that IH \ref{ih3} holds for this $p$.
\item[Case 1.] {\em $p$ is not in $B$.} Then IH \ref{ih3} already holds for $p$ by induction.
\item[Case 2.] {\em $p$ is contained by $B$ and at least $2$ rectangles besides $B$.} By induction not all these  rectangles have the same color, because IH \ref{ih3} was valid for $p$ before this step. Thus IH \ref{ih3} still holds for $p$. 
\item[Case 3.] {\em $p$ is contained by $B$ and by exactly one more rectangle.} For a point $p$ contained by $B$, the set of rectangles containing $p$ is the same as those containing its projection $p'$, because $B$ is the lowest rectangle. Thus IH \ref{ih3} holds for $p$ whenever it holds for $p'$. We know that $p'$ (like $p$) is contained by the red rectangle $B$ and by exactly one more rectangle, which is not red, because IH \ref{ih4} was valid for $p'$ before this step. Thus IH \ref{ih3} holds for $p'$ and thus for $p$ again.

If there is no base-line point contained by only $B$, then IH \ref{ih4} remains true. If there is such a point $q$ then we need to maintain the validity
of IH \ref{ih4}. 
It is easy to check that the following is true.
\begin{lemma}[divide and color] \label{recolorclaim}
If a base-line point $q$ is contained by only one rectangle with color $c$, then by switching the other two colors on the rectangles strictly left (resp. right) of $q$, IH \ref{ih3} remains valid. 
\end{lemma}
With only such `divide and color' steps we will change the coloring such that at the end there will be no point on the base-line contained by exactly $1$ green rectangle. Finally we will switch the colors green and red on all the rectangles to regain a coloring satisfying IH \ref{ih4}, while IH \ref{ih3} remains true during this process by Lemma \ref{recolorclaim}. For an illustration for the rest of the proof, see Figure \ref{divide}.
\begin{figure}
    \centering
            {\epsfig{file=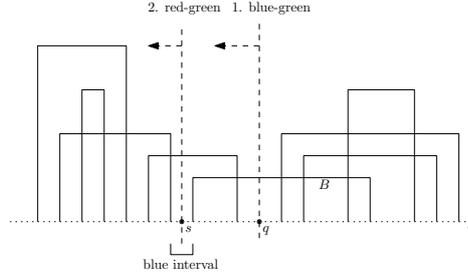,width=8cm,silent=}}
        \hspace{1mm}
    \caption{The color switches of the `divide and color' method in Theorem \ref{dualbrect}.}
      \label{divide}
\end{figure}

{\em Recoloring rectangles so that at the end there are no green base-line points.}
In the current coloring all green base-line points are left or right of $B$ because $B$ is red. Recall that $q$ is a base-line point contained only by $B$. We will deal with the left side first, changing the colors only of rectangles strictly left of $q$ and making a good coloring satisfying IH \ref{ih4} for any base-line point left of $q$. On the right side we proceed analogously, changing the colors only of rectangles strictly right of $q$.

{\em Recoloring rectangles strictly left of $q$.}
On the base-line left of $B$ there are some intervals of single colored points, all of them green or blue. 
\item[Case 1.] {\em If there is no green interval left of $q$}, i.e., there are no green base-line points left of $q$, we are done. 
\item[Case 2.] {\em If the closest such interval to $B$ is green}, we switch colors blue and green on the rectangles strictly left of $q$. This way the closest such interval to $B$ is now blue, we can proceed as in the next case.
\item[Case 3.] {\em If the closest such interval to $B$ is blue}, we switch colors red and green on the rectangles strictly left of any point $s$ of this blue interval. This way we got rid of all green base-line points left of $q$.

We recolor rectangles strictly right of $q$ in an analogous way.
At the end the coloring satisfies IH \ref{ih4} for any base-line point right of $q$. Thus we got a coloring satisfying both IH \ref{ih3} and IH \ref{ih4}.
\begin{figure}
    \centering
            {\epsfig{file=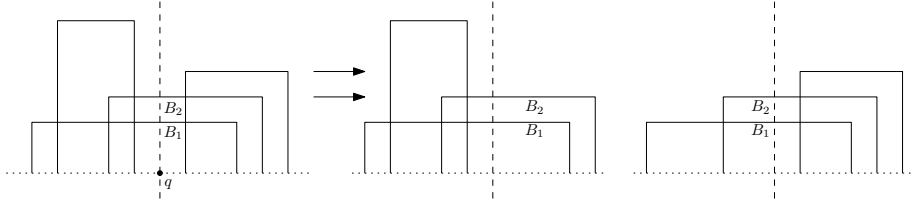,width=12cm,silent=}}
        \hspace{1mm}
    \caption{The division of the `divide and color' method in Theorem \ref{dualbrect2}.}
      \label{divide2}
\end{figure}
\end{proof}
\begin{Theorem} \label{dualbrect2}
$\ol{\chi_3}({\cal B})=2$, i.e., any finite family of bottomless rectangles can be colored with $2$ colors such that any point contained by at least $3$ of them is not monochromatic.
\end{Theorem}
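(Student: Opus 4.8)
The plan is to follow the same top-down sweeping strategy as in Theorem~\ref{dualbrect}, but first I would recast the whole statement as a one-dimensional problem on the base-line $e$. Since all top edges may be assumed distinct, each rectangle is an open interval on $e$ carrying a distinct \emph{height} (its top edge). For a plane point $p=(x,y)$ the set of rectangles containing $p$ consists of exactly those rectangles whose interval contains $x$ and whose height exceeds $y$; equivalently, it is the height-prefix (the $k$ highest, for $k=|\{\text{covers of }p\}|$) of the set of rectangles whose interval contains the projection $p'$. Consequently a $2$-coloring is $3$-proper for \emph{all} plane points if and only if, for every point $x$ of $e$, the three highest intervals containing $x$ (when there are at least three) are not monochromatic: sufficiency holds because any cover-set of size $\ge 3$ contains the top three, and necessity because a point placed just below the third-highest top is covered by exactly those three. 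The lower bound $\ol{\chi_3}({\cal B})\ge 2$ is immediate (nest three rectangles). So it remains to $2$-color the intervals so that the top three at every base-line point carry both colors.

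I would prove this by induction, inserting the intervals in decreasing order of height, so that the newly inserted interval $B$ is always the lowest one present. Throughout I would maintain two invariants: (i) the current coloring is $3$-proper, i.e.\ the top three intervals at every base-line point are bichromatic; and (ii) (the analogue of IH~\ref{ih4}, with \emph{red} playing the role of the reserved color) no base-line point is covered by exactly two intervals that are both red. I would color each new $B$ red. Invariant~(ii) is exactly what makes this safe for~(i): the only points whose top three can change are those of $B$ that previously had at most two covers, and the single genuinely new constraint arises at points with exactly two previous covers $\{C,D\}$, whose new top three is $\{C,D,B\}$; by~(ii) the pair $\{C,D\}$ is not monochromatic red, so adding the red interval $B$ keeps the triple bichromatic. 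Points with $\ge 3$ previous covers are unaffected ($B$ is not in their top three), and points with $\le 1$ previous cover acquire a top set of size $\le 2$, which carries no constraint.

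The real work, and the step I expect to be the main obstacle, is restoring invariant~(ii) after coloring $B$ red, which is where the ``divide and color'' method of Figure~\ref{divide2} enters. Adding the red $B$ creates new exactly-two-covered points precisely in the single-cover cells under $B$ whose unique previous cover is red; these are the only violations of~(ii). As in Lemma~\ref{recolorclaim} I would repair them by swapping the two colors on suitable sub-families determined by a division of $B$'s span at the endpoints of the cells below it, proceeding outward from $B$ as in Theorem~\ref{dualbrect} and possibly finishing with a global color swap to reinstate red as the reserved color. The subtlety, absent in the three-color argument, is that with only two colors a one-sided swap can turn a bichromatic pair or triple straddling the division point into a monochromatic one; so the division points must be chosen along the cell boundaries so that every relevant top-two pair and top-three triple lies entirely on one side of each swap (whence its colors merely flip and bichromaticity is preserved), while the offending monochromatic-red pairs get exactly one of their two intervals recolored. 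Verifying that such a division always exists and simultaneously (a) preserves invariant~(i), (b) eliminates all monochromatic-red pairs, and (c) creates no new ones, is the crux of the proof.
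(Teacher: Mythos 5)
Your reduction to the base-line and to top-three sets is correct, and your insertion framework (decreasing height order, color the new lowest interval $B$ red, invariants (i) and (ii)) is sound as far as it goes: invariant (ii) is exactly what guarantees that coloring the new lowest rectangle red preserves 3-properness. But your argument stops at precisely the point where the theorem's entire difficulty lies: you never show that invariant (ii) can be restored after the insertion. You yourself call this step ``the crux of the proof,'' list the properties (a)--(c) a division would have to satisfy, and then assert nothing about why such a division exists. This is a genuine gap, not a deferred routine verification. Concretely, after inserting the red $B$, the singly-covered cells beneath it may alternate between red-singleton and blue-singleton ones, so no choice of color for $B$ avoids creating violations, and the forced recolorings of the offending rectangles can cascade: each such rectangle may extend far outside $B$ and participate in top-three triples and exactly-two-covered cells elsewhere whose bichromaticity must survive. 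With two colors a one-sided swap is simply complementation, so, as you note, any constraint straddling a division point can be destroyed --- the mechanism that makes Lemma~\ref{recolorclaim} work with three colors (the reserved color of the rectangle over the cut point is fixed by the swap) has no two-color analogue, and you supply none.

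The paper's actual proof avoids the repair problem altogether, and the way it does so indicates why your plan is hard to complete. It maintains a \emph{stronger, symmetric} invariant (IH~\ref{ihx2}): every base-line point covered by exactly two rectangles sees both colors, not merely ``not two reds.'' It then runs a different induction: if some base-line point covered by at most $2$ rectangles has rectangles strictly on both sides, the family is cut there and the two sides are colored recursively together with the (at most two) rectangles over the cut point; the symmetric invariant is exactly what forces those shared rectangles to receive opposite colors in both recursive colorings, so the two colorings can be glued after a global swap --- no swap across a cut, of the kind your plan requires, is ever performed. When no such cut point exists, the family is rigid (only the two extreme cells on each end of the base-line are 1- or 2-covered), and the paper deletes the lowest one or two rectangles, colors the rest by induction, and colors the deleted ones explicitly via a short case analysis (Cases 4a--4e). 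Note also that your asymmetric invariant (ii) would be too weak to drive such a merge (the two shared rectangles could both be blue on one side), so to turn your sketch into a proof you must either establish your unproven repair step or strengthen the invariant and abandon the insertion order in favor of a structural induction like the paper's.
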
 
\begin{proof}

Given a family of rectangles with a common base line $e$ we want to color them red and blue such that any point contained by at least $3$ rectangles is contained by two differently colored rectangles. The proof is by induction on the number of rectangles. A single rectangle is colored red (it may be recolored later). We always maintain the following induction hypotheses.
\begin{ih} \label{ihx1}
Any point $p$ contained by at least $3$ rectangles is contained by a red and a blue rectangle as well (the coloring is a $3$-proper coloring).
\end{ih}
\begin{ih} \label{ihx2}
Any point $q$ on the base-line contained by exactly $2$ rectangles is contained by a red and a blue rectangle.
\end{ih}

We need a more advanced version of the `divide and color' method. In a general step, we distinguish some cases.

\item[Case 1.] {\em There is a point $q$ on the base-line not contained by any rectangle and there are some rectangles strictly left and strictly right of $q$.} Color the rectangles left of $q$ and the ones right of $q$ separately by induction, putting these together maintains the induction hypotheses.

\item[Case 2.]{\em There is a point $q$ on the base-line contained by exactly $1$ rectangle $B$ and there are some rectangles strictly left and strictly right of $q$.} Color first the rectangles left of $q$ together with $B$ then the rectangles right of $B$ together with $B$. This can be done by induction, because there were some rectangles on both sides. By a possible switching of the colors in the left and right parts, $B$ is red in both colorings. Putting together these two colorings we get a coloring of all the rectangles, for this coloring both induction hypotheses hold by induction.
\item[Case 3.]{\em There is a point $q$ on the base-line contained by exactly two rectangles, $B_1$ and $B_2$ and there is at least one rectangle both right and left of $q$.} See Figure \ref{divide2} for an illustration. In this case we can color by induction the rectangles strictly left of $q$ together with these two rectangles. As in this coloring IH \ref{ihx2} holds for $q$, $B_1$ and $B_2$ must have different colors. After a possible switch of the two colors we can assume that $B_1$ is red and $B_2$ is blue. The same way we can color by induction the rectangles strictly right of $q$ together with these two rectangles. This way the two rectangles are colored with the same colors in both colorings and so we can put together these two colorings, and we get a coloring of all the rectangles. For this coloring both induction hypotheses hold by induction.

\item[Case 4.]{\em For any base-line point $q$ contained by exactly $1$ or $2$ rectangles, there is no rectangle strictly left or right of $q$.} The left and right sides of the rectangles divide the base-line into $2$ half-lines and $2n-1$ intervals. It is easy to see that in this case the only base-line points contained by exactly $1$ rectangle are the points of the leftmost $L_1$ and rightmost $R_1$ interval and the 2-contained points are the points of the second leftmost $L_2$ and second rightmost $R_2$ interval.
Consider rectangle $B$, the one with the lowest top edge. Again we have to distinguish some cases.
\item[Case 4a.] {\em $B$ does not contain $1$- or $2$-covered base-line points.} In this case we color the rest of the rectangles by induction and then color $B$ with an arbitrary color. This way IH \ref{ihx2} remains valid trivially. Take an arbitrary $p$ contained by at least $3$ rectangles, we prove that IH \ref {ihx1} holds for $p$. 

{\em If $p$ is not contained by $B$}, then IH \ref {ihx1} holds for $p$ by induction.

{\em If $p$ is contained by $B$ and at least $3$ other rectangles}, then IH \ref {ihx1} holds for $p$ by induction.

{\em If $p$ is contained by $B$ and exactly $2$ other rectangles}, then its projection $p'$ is contained by $B$ and the same $2$ rectangles, because $B$ was the lowest rectangle. Before this step IH \ref{ihx2} was valid for $p'$, thus these two rectangles have different colors.

In the rest of the cases $B$ contains some intervals from $L_1,L_2,R_1,R_2$. 
\item[Case 4b.] {\em $B$ contains some of $L_1$ and $L_2$ and does not contain $R_1$ and $R_2$}. Color the rest of the rectangles by induction and then color $B$ with a color different from the other rectangle containing $L_2$. This way we obtain a coloring that satisfies IH \ref{ihx2} by induction. The coloring satisfies IH \ref{ihx1} as well, the proof is similar to the one in the previous case.

\item[Case 4c.] {\em $B$ contains some of $R_1$ and $R_2$ and does not contain $L_1$ and $L_2$}. The coloring is symmetrical to the one in the previous case. The proof is also analogous.

In the rest we have to distinguish cases according to the rectangle $B_2$ with the second lowest top edge.
\item[Case 4d.] {\em $B$ contains $L_2$ and $R_2$ as well and $B_2$ does not contain any of $L_1,L_2,R_1,R_2$.} In this case color all the rectangles except $B_2$ by induction and then color $B_2$ with the same color as $B$. For this coloring IH \ref{ihx2} holds as it is enough to check the points of $L_2$ and $R_2$ and here IH \ref{ihx2} holds by induction. We have to prove that IH \ref{ihx1} holds as well. For that take an arbitrary $p$ contained by at least $3$ rectangles, we prove that IH \ref{ihx1} holds for $p$, i.e., not all of these rectangles have the same color. 

{\em If $p$ is not contained by $B_2$}, then IH \ref {ihx1} holds for $p$ by induction.

{\em If $p$ is contained by $B_2$ and at least $3$ other rectangles}, then IH \ref {ihx1} holds for $p$ by induction. 

{\em If $p$ is contained by $B_2$ and exactly $2$ other rectangles and its projection $p'$ is contained by the same rectangles as $p$}, then by IH \ref {ihx2} $p'$ is contained by two differently colored rectangles besides $B_2$. This holds for $p$ as well. 

{\em If $p$ is contained by $B_2$ and exactly $2$ other rectangles and its projection $p'$ is not contained by the same rectangles as $p$}, then the only possibility is that $p'$ is contained by $B$ too (as only $B$ is lower than $B_2$). By induction IH \ref {ihx1} holds for the rectangles excluding $B_2$, thus $p'$ was contained by red and blue rectangles as well without considering $B_2$. As $B_2$ has the same color as $B$, the same holds for $p$.

\item[Case 4e.] {\em $B$ contains $L_2$ and $R_2$ as well and $B_2$ contains some of $L_1,L_2,R_1,R_2$.} By symmetry we can assume that $B_2$ contains $L_2$ (and maybe $R_2$ too). In this final case delete both $B$ and $B_2$ and color the rest of the rectangles by induction. Now put back these two rectangles. If $R_2$ is contained by some rectangle besides $B$ and $B_2$ then color $B$ differently from the color of this rectangle. Otherwise color $B$ arbitrarily. Finally, color $B_2$ differently from $B$. 
In this coloring IH \ref {ihx2} holds as it is enough to check the points of $L_2$ and $R_2$ and here IH \ref {ihx2} holds trivially. We prove that IH \ref{ihx1} holds as well. For that take again an arbitrary $p$ contained by at least $3$ rectangles, we prove that IH \ref{ihx1} holds for $p$, i.e., not all of these rectangles have the same color.

{\em If $p$ is contained by  both $B$ and $B_2$}, then IH \ref{ihx1} holds for $p$ as these rectangles are colored differently. 

{\em If $p$ is contained by at least $3$ rectangles besides $B$ and $B_2$}, then again by induction IH \ref{ihx1} holds for $p$.

{\em If $p$ is contained by one of $B$ and $B_2$ and only two other rectangles}, then its projection $p'$ was contained by exactly two rectangles $B_3$ and $B_4$ in the coloring without $B$ and $B_2$. $B_3$ and $B_4$ must have different colors, because by induction IH \ref{ihx2} holds for the rectangles excluding $B$ and $B_2$. As $B$ and $B_2$ are the lowest rectangles, the point $p$ is contained by the differently colored rectangles $B_3$ and $B_4$ as well.
\end{proof}
\begin{claim} \label{dualbrect3}
Colorings guaranteed by Theorem \ref{dualbrect} and Theorem \ref{dualbrect2} can be found in $O(n^2)$ time.
\end{claim}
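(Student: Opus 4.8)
The plan is to run each proof literally as an algorithm and to charge its work to color changes, each of which flips the colors on all rectangles lying to one side of a base-line point; a single such pass costs $O(n)$, and the point is to show only $O(n)$ passes are ever needed. First I would handle Theorem~\ref{dualbrect}. Pre-sort the rectangles by the $y$-coordinate of their top edges and by the $x$-coordinates of their vertical sides, in $O(n\log n)$ time, and then insert them one at a time in downwards order exactly as in the proof. For the current subfamily I would maintain the coverage structure of the base line $e$ explicitly, i.e.\ the at most $2m+1$ elementary intervals cut out by the vertical sides, each tagged with the rectangles covering it; this can be (re)computed in $O(m)=O(n)$ time per step by one sweep over the pre-sorted $x$-coordinates. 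After coloring the new rectangle $B$ red I locate the base-line point $q$ covered only by $B$ and carry out the constantly many \emph{divide and color} recolorings of Lemma~\ref{recolorclaim}, plus the final green--red swap. Each recoloring flips the two relevant colors on all rectangles strictly to one side of a base-line point ($O(n)$ rectangles), and only $O(1)$ of them occur per step, so each of the $n$ insertion steps costs $O(n)$, for a total of $O(n^2)$.

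For Theorem~\ref{dualbrect2} I would implement the recursion directly. At a node handling a subfamily of size $m$, I first compute the base-line coverage in $O(m)$ time and decide which of Cases 1--4 applies, together with the splitting point $q$ or the lowest and second-lowest rectangles $B,B_2$; the top-edge order comes from the global pre-sort, and the $x$-sorted list of sides is threaded through the recursion, split in one linear pass at each division so that no re-sorting is ever needed. In Cases 1--3 I recurse on the (at most two) subfamilies obtained by cutting at $q$ and then merge the returned colorings, possibly flipping all colors of one part so the shared straddling rectangles agree; this flip costs $O(m)$. In Case 4 I strip off $B$ (and in Cases 4d--4e also $B_2$), recurse on the remainder, and color the stripped rectangle(s) by the stated local rule in $O(m)$ time. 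Thus every node does $O(m)$ work.

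To bound the total cost, let $T(m)$ denote the running time. Cases 1--3 yield $T(m)\le T(a)+T(b)+O(m)$ where, since $q$ has at least one rectangle strictly on each side and at most two straddling rectangles are duplicated, $1\le a,b\le m-1$ and $a+b\le m+2$; Cases 4 give $T(m)\le T(m-1)+O(m)$ or $T(m)\le T(m-2)+O(m)$. An easy induction then gives $T(m)=O(m^2)$: for $T(a)+T(b)+cm$ the sum $a^2+b^2$ subject to $a+b\le m+2$, $a,b\le m-1$ is maximized at the extreme split $a=m-1$, and there $C(a^2+b^2)+cm\le Cm^2$ for $C>c/2$ and $m$ large. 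Adding the initial $O(n\log n)$ sort yields the claimed $O(n^2)$.

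The hard part will be precisely this last recurrence for Theorem~\ref{dualbrect2}: one must check that the duplicated straddling rectangles, which push $a+b$ above $m$, do not cause a super-quadratic blow-up, and that every per-node operation---the base-line sweep, the case selection, and especially the color-matching flip when merging---is genuinely linear, which relies on carrying the $x$-sorted order of the sides through the recursion instead of re-sorting. By contrast the analysis for Theorem~\ref{dualbrect} is routine once one observes that each insertion performs only a constant number of one-sided color flips.
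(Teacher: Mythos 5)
Your proposal is correct and follows essentially the same route as the paper: run each construction as an algorithm while maintaining the array of base-line intervals, charge $O(n)$ work per insertion step for Theorem \ref{dualbrect}, and for Theorem \ref{dualbrect2} analyze the recursion via $T(m)\le T(a)+T(b)+O(m)$ with $a+b\le m+2$ and both parts of size at most $m-1$ (or $T(m)\le T(m-1)+O(m)$ in Case 4), which solves to $O(m^2)$. Your write-up is in fact slightly more explicit than the paper's (threading the sorted order through the recursion and spelling out why the recurrence stays quadratic), but the decomposition and the key bounds are identical.
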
 
\begin{proof}
Finding the upwards order of the rectangles takes $O(n\log n)$ time. In each step we maintain an array of the intervals of the base line. If an interval is contained only by one rectangle, we keep its color as well.

In the algorithm of Theorem \ref{dualbrect} in each step we search for some colored interval constant times and recolor some rectangles with a given property (left of a given interval, etc.) constant times. This takes $c\cdot k$ time if we have $k$ rectangles at that step. We have $n$ such steps and $k\leq n$ always, so the running time is $O(n^2)$.

For the algorithm of Theorem \ref{dualbrect2}, we prove by induction on the number of bottomless rectangles, that $c_0\cdot n^2$ is an upper bound on the number of steps needed to color any family of $n$ bottomless rectangles, for some $c_0$ large enough. Except Case $4$ we always do the `divide and color' step by cutting the family into two nontrivial parts and color separately. Finding whether there is such a cut, doing the cut (and maintaining the upwards order in the two parts) and the possible recolorings after the recursional colorings take $c_1\cdot n$ time for $n$ rectangles. By induction, this and the two recursional algorithms together take at most $c_0\cdot a^2+c_0\cdot b^2+c_1\cdot n$ time where $a+b\leq n+2$. 
In Case $4$ we can decide which kind of step is needed and color $B$ and $B_2$ in $c_2\cdot n$ steps, and we can do the recursion in $c_0\cdot (n-1)^2$ steps. Thus we need at most $c_0\cdot(n-1)^2+c_2n$ time in this case. 
It is easy to see that in both of these cases the time can be bounded from above by $c_0\cdot n^2$ if $c_0$ is large enough (depending on $c_1$ and $c_2$).
\end{proof}

Consider now the case of axis-parallel rectangles intersecting a common base-line $e$ (this family is denoted by $\cal B'$). We start with the case of region coloring, that is, estimating $\ol{\chi_k}({\cal B'})$. For this the best lower bound is due to \cite{pachtothtardos}, where they give a construction showing that $\ol{\chi_2}({\cal B'})\geq 3$ for every $k$ (actually their construction is for arbitrary axis-parallel rectangles but it can be easily modified to use only axis-parallel rectangles intersecting a common base-line). The best upper bound is due to \cite{smorodnew}, proving $\ol{\chi_2}({\cal B'})\leq 8$, and for the case of $k>2$ we can separately color the upper and lower parts (divided by the base-line) of the rectangles with $2$ colors by Theorem \ref{dualbrect2} and then for a rectangle colored by $a$ in the upper part and $b$ in the lower part, we give the ordered pair $(a,b)$ as a color. It is easy to see that this is a good $3$-proper coloring of the rectangles, thus proving:
\begin{claim} \label{bprimed}
$\ol{\chi_3}({\cal B'})\leq 4$. 
\end{claim}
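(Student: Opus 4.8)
The plan is to exploit the base-line $e$ to split each rectangle of ${\cal B'}$ along $e$ into an upper part and a lower part, and then to reduce each part to the already-solved bottomless case, Theorem \ref{dualbrect2}. Take $e$ to be the line $y=0$. For a rectangle $R=\{(x,y)\mid a<x<b,\ d<y<u\}\in{\cal B'}$ crossing $e$ we have $d<0<u$; write $R^{+}=R\cap\{y\geq 0\}$ for its upper part and $R^{-}=R\cap\{y\leq 0\}$ for its lower part. The key observation is that, as far as points of the closed upper half-plane are concerned, the family $\{R^{+}\}$ behaves exactly like a family of bottomless rectangles: a point $p$ with $y_p\geq 0$ lies in $R^{+}$ precisely when $a<x_p<b$ and $y_p<u$, which is exactly the condition for $p$ to lie in the bottomless rectangle $\{a<x<b,\ y<u\}$ (the constraint $d<y_p$ is automatic, since $d<0\leq y_p$). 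Symmetrically, reflecting the plane in $e$ carries the lower parts $\{R^{-}\}$ to a family that, on the lower half-plane, behaves like bottomless rectangles; as reflection is a containment-preserving bijection, Theorem \ref{dualbrect2} applies verbatim on that side too.

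Next I would apply Theorem \ref{dualbrect2} twice, once to each family. This yields a $2$-coloring $c^{+}\colon{\cal B'}\to\{0,1\}$ such that every point $p$ with $y_p\geq 0$ lying in at least $3$ of the upper parts sees two distinct values of $c^{+}$, and likewise a $2$-coloring $c^{-}$ governing the lower parts and points with $y_p\leq 0$. I then color each rectangle $R$ by the ordered pair $(c^{+}(R),c^{-}(R))$, which uses $2\cdot 2=4$ colors in total.

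It remains to verify that this product coloring is $3$-proper. Take any point $p$ contained in at least $3$ rectangles. If $y_p\geq 0$, then $p$ lies in the upper parts of all of them, so by the guarantee on $c^{+}$ at least two of these rectangles receive different first coordinates, hence different pairs, and $p$ is not monochromatic; the case $y_p\leq 0$ is handled identically using $c^{-}$. Since every point of the plane satisfies $y_p\geq 0$ or $y_p\leq 0$ (points lying exactly on $e$ may simply be assigned to the upper coloring), every point in at least $3$ rectangles is non-monochromatic, which gives $\ol{\chi_3}({\cal B'})\leq 4$.

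The argument is essentially routine once the reduction is in place; the only step needing genuine care is the boundary identification, namely confirming that the containment relation between a half-plane point and a truncated rectangle $R^{\pm}$ coincides \emph{exactly} with the containment relation for the corresponding bottomless rectangle, including for points on $e$. Establishing this precisely is what makes it legitimate to invoke Theorem \ref{dualbrect2} on each side unchanged and to conclude that a point in at least three rectangles is non-monochromatic in at least one of the two coordinates.
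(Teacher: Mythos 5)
Your proof is correct and follows essentially the same route as the paper: the paper also splits each rectangle of ${\cal B'}$ along the base-line $e$, colors the family of upper parts and the family of lower parts separately with $2$ colors each via Theorem \ref{dualbrect2}, and assigns each rectangle the ordered pair of its two colors, giving $2\cdot 2=4$ colors. Your detailed verification (that each half-family behaves exactly like bottomless rectangles and that any point in at least $3$ rectangles lies in one closed half-plane and is hence non-monochromatic in the corresponding coordinate) is precisely the argument the paper leaves implicit.
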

Further there is a simple construction of four rectangles where for each two rectangles there is a point covered by only these two rectangles (see Figure \ref{dualbrectprimefig}), this implies the following.
\begin{claim}\label{dualbrectprime}
$\ol{\chi_2}({\cal B'})\geq4$. 
\end{claim}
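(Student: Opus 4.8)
The plan is to produce a single family ${\cal F'}=\{R_1,R_2,R_3,R_4\}\subset{\cal B'}$ in which every one of the $\binom{4}{2}=6$ pairs $\{R_i,R_j\}$ is witnessed by a \emph{private} point, i.e.\ a point of the plane lying in $R_i$ and $R_j$ but in neither of the other two rectangles. Granting such a family, the bound follows at once: a private point for $\{R_i,R_j\}$ is a point whose covering set is exactly $\{R_i,R_j\}$, so it contributes a hyperedge of size $2$ to $H^*_{\ge2}(P_{plane},{\cal F'})$, and any $2$-proper coloring must color $R_i$ and $R_j$ differently. As this holds for all six pairs, the four rectangles receive pairwise distinct colors, whence $\chi(H^*_{\ge2}(P_{plane},{\cal F'}))\ge4$ and therefore $\ol{\chi_2}({\cal B'})\ge4$. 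This mirrors the lower-bound argument of Theorem \ref{dualbrect}, where three pairwise privately witnessed bottomless rectangles forced $3$ colors; the only new content is that four members of ${\cal B'}$ can realize all six private-pair constraints at once (a copy of $K_4$ on these constraints), something impossible inside ${\cal B}$ by that same theorem.

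First I would fix the horizontal extents as an overlapping staircase, taking the $x$-intervals of $R_1,\dots,R_4$ to be $[0,4],[1,5],[2,6],[3,7]$; these overlap pairwise (they share the column $x\in(3,4)$), and the two extreme pairs $\{R_1,R_2\}$ and $\{R_3,R_4\}$ are already separated horizontally, witnessed near $(1.5,0)$ and $(5.5,0)$. For the remaining four pairs I would then choose the vertical extents, all straddling the base-line $e=\{y=0\}$, so that each required intersection protrudes from the intruding rectangles either above or below $e$; concretely one may take the $y$-intervals to be $[-1,4],[-3,1],[-2,2],[-4,5]$ for $R_1,\dots,R_4$. A direct check exhibits the four missing witnesses: $\{R_1,R_3\}$ above $e$ near $(2.5,1.5)$, $\{R_2,R_3\}$ below $e$ near $(2.5,-1.5)$, $\{R_2,R_4\}$ below $e$ near $(4.5,-2.5)$, and the fully central pair $\{R_1,R_4\}$ above $e$ near $(3.5,3)$. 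Verifying these six containments is routine arithmetic, and is precisely what Figure \ref{dualbrectprimefig} depicts.

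The hard part is not the forcing step but the geometric realizability, and within it the \emph{central} pairs whose horizontal overlap lies entirely inside the common column where all four rectangles are present---here the inner pair $\{R_2,R_3\}$ and the outermost pair $\{R_1,R_4\}$. These admit no horizontal separation, so one must arrange $R_i\cap R_j$ to reach strictly above the tops of both intruders, or strictly below both their bottoms. Since every rectangle of ${\cal B'}$ straddles $e$, the union of two intruders is a single vertical interval around $e$, so a protrusion can only escape upward or downward, never through a gap; the essential task is thus to assign the four top-edges and four bottom-edges so that all six protrusion demands are met simultaneously, some served from above and some from below. It is exactly the presence of a lower edge---absent for bottomless rectangles---that supplies the extra degree of freedom making this system solvable, consistent with $\ol{\chi_2}({\cal B})=3$ from Theorem \ref{dualbrect}.
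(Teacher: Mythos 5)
Your proposal is correct and follows exactly the paper's approach: exhibit four rectangles of ${\cal B'}$ such that every pair has a private witness point, forcing all four colors to be distinct (the paper does this via Figure \ref{dualbrectprimefig}, you via explicit coordinates, which check out). The only difference is that you supply the arithmetic verification that the paper leaves to the picture.
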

The case of coloring points seems less natural for axis-parallel rectangles intersecting a common base-line, still it can be considered. Coloring the points in the lower and upper parts with different colors ensures that any rectangle covering one from both sides is not monochromatic. The two sides can be colored by Claim \ref{folklore} with $3$-$3$ colors, thus proving:
\begin{claim}
$\chi_2({\cal B'})\leq 6$. \label{bprime2}
\end{claim}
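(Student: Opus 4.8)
The plan is to realize the splitting idea already indicated in the text surrounding the statement: cut the point set by the base-line $e$ and color the two halves with disjoint palettes of three colors each, invoking Claim \ref{folklore} on each half separately.

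First I would fix coordinates so that $e$ is the line $y=0$ and, after the usual harmless perturbation, assume no point of $P$ lies on $e$. Write $P^+$ for the points of $P$ with $y>0$ and $P^-$ for those with $y<0$. The crucial observation is a reduction. A rectangle $R=\{(x,y)\mid a<x<b,\ d<y<c\}\in{\cal B'}$ crosses $e$, so $d<0<c$. Consequently, for a point $p=(x,y)\in P^+$ the lower constraint $y>d$ is automatic (since $y>0>d$), so $p\in R$ exactly when $a<x<b$ and $y<c$. Hence $R\cap P^+$ is precisely the intersection of $P^+$ with the bottomless rectangle $\{(x,y)\mid a<x<b,\ y<c\}\in{\cal B}$. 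Symmetrically, for $p\in P^-$ the upper constraint $y<c$ is automatic, so $R\cap P^-$ is the intersection of $P^-$ with a rectangle whose top edge lies on $e$; reflecting the plane across $e$ turns such a topless rectangle into a bottomless one, so this set is, up to the reflection, a hyperedge of $H(P^-,{\cal B})$ as well.

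Next I would color. I apply Claim \ref{folklore} to $P^+$ to obtain a $2$-proper coloring for ${\cal B}$ using colors $\{1,2,3\}$; I apply it to the reflection of $P^-$ and pull the coloring back, obtaining a $2$-proper coloring of $P^-$ for ${\cal B}$ using the disjoint palette $\{4,5,6\}$. Together these give a $6$-coloring of $P$.

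Finally I would verify $2$-properness for ${\cal B'}$. Take $R\in{\cal B'}$ with $|R\cap P|\ge 2$. If $R$ meets both $P^+$ and $P^-$, then it contains a point colored from $\{1,2,3\}$ and one from $\{4,5,6\}$, hence two colors. Otherwise $R\cap P$ lies entirely on one side, say $R\cap P\subseteq P^+$ with at least two points; by the reduction above $R\cap P^+$ equals a bottomless-rectangle intersection containing at least two points of $P^+$, which is non-monochromatic by the coloring of $P^+$. The case $R\cap P\subseteq P^-$ is symmetric. There is essentially no hard step beyond the set-equality in the reduction; the only points requiring care are the direction of the vacuous inequality on each side of $e$ and the fact that the reflection correctly converts topless rectangles into bottomless ones, after which Claim \ref{folklore} applies verbatim.
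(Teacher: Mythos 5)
Your proof is correct and follows essentially the same approach as the paper: split $P$ by the base-line, color the two sides via Claim \ref{folklore} with disjoint palettes of three colors each, and observe that a rectangle of ${\cal B'}$ either meets both sides or restricts to a bottomless-rectangle hyperedge on one side. Your write-up merely makes explicit the reduction (the vacuous inequality and the reflection) that the paper leaves implicit.
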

Indeed, a rectangle either contains points from both sides or contains at least $2$ points on one 
side. Further, this also implies that:
\begin{claim} \label{bprime3}
$\chi_3({\cal B'})\leq3$. 
\end{claim}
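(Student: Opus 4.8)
The plan is to reuse the side-splitting idea behind Claim \ref{bprime2}, but to exploit that for $k=3$ the two sides of $e$ need not receive disjoint palettes. Write $P^+$ for the points of $P$ lying above the base-line $e$ and $P^-$ for those lying below it.

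The first step is to observe that, restricted to a single side of $e$, the family $\cal B'$ behaves exactly like $\cal B$. Indeed, any $R\in{\cal B'}$ has the form $\{(x,y): a<x<b,\ y_1<y<y_2\}$ with $y_1$ below and $y_2$ above $e$, so $R\cap P^+=\{p\in P^+: a<x_p<b,\ y_p<y_2\}$ is precisely the trace on $P^+$ of the bottomless rectangle $\{(x,y): a<x<b,\ y<y_2\}\in{\cal B}$. Hence every hyperedge that $\cal B'$ induces on $P^+$ is also induced by $\cal B$, so any coloring of $P^+$ that is good for $\cal B$ in the sense of Claim \ref{folklore} is also good for $\cal B'$ on $P^+$. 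Reflecting the picture in $e$ gives the same statement for $P^-$.

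The second step is the coloring itself. I would apply Claim \ref{folklore} to $P^+$ to get a $3$-coloring in which every bottomless rectangle --- and hence, by the previous step, every $R\in{\cal B'}$ --- containing at least two points of $P^+$ already contains two differently colored points of $P^+$; and I would apply Claim \ref{folklore} (after reflection in $e$) to $P^-$ to obtain such a $3$-coloring of $P^-$, \emph{reusing the very same three colors}. Since $P^+$ and $P^-$ are disjoint, together these give a single $3$-coloring of $P$ using only $3$ colors.

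Finally I would verify $3$-properness by pigeonhole: any $R\in{\cal B'}$ containing at least $3$ points of $P$ contains at least $2$ of them on the same side of $e$, and by construction that side already supplies two differently colored points of $R$, so $R$ is not monochromatic. This proves $\chi_3({\cal B'})\le 3$. The only place needing care is the reduction in the first step, namely checking that one-sided traces of $\cal B'$ coincide with traces of $\cal B$; this is immediate, so there is no real obstacle. The point of the argument is simply that the $k=3$ pigeonhole removes the need for the disjoint palettes (and hence the extra three colors) used in Claim \ref{bprime2}.
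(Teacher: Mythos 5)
Your proposal is correct and is essentially the paper's own argument: the paper likewise colors the points above and below the base-line $e$ with the same three colors via Claim \ref{folklore} and then applies the pigeonhole observation that a rectangle of $\cal B'$ with at least $3$ points has $2$ on one side. The only difference is that you spell out explicitly the (immediate) fact that one-sided traces of $\cal B'$ coincide with traces of $\cal B$, which the paper leaves implicit.
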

Indeed, color both sides with the same $3$ colors according to Claim \ref{folklore}, 
then any rectangle containing at least $3$ points contains $2$ points on one side, thus containing two differently colored ones as well. Finally, we have that: 
\begin{claim}
$\chi_7({\cal B'})=2$. \label{bprime4}
\end{claim}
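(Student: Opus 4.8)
The plan is to mimic the proof of Claim~\ref{bprime3}, but to replace the use of Claim~\ref{folklore} ($\chi_2({\cal B})=3$) by the stronger two-colour result of Theorem~\ref{wcfbthm2} ($\chi_4({\cal B})=2$). Fix the base-line $e$ and split the point set $P$ into the points $P^+$ strictly above $e$ and the points $P^-$ strictly below $e$ (we may assume no point lies on $e$). Every rectangle $R\in{\cal B'}$ meets $e$, so its upper part $R^+=R\cap\{y>e\}$ is an axis-parallel rectangle whose lower edge lies on $e$; restricted to $P^+$ it selects exactly the same points as a bottomless rectangle with the same vertical sides and the same top edge. Symmetrically, after reflecting across $e$, the lower part $R^-=R\cap\{y<e\}$ behaves on $P^-$ like a bottomless rectangle. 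Hence Theorem~\ref{wcfbthm2} applies to each side separately.

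First I would colour $P^+$ with the two colours red and blue using the $4$-proper colouring of $H(P^+,{\cal B})$ guaranteed by Theorem~\ref{wcfbthm2}, so that every bottomless rectangle containing at least $4$ points of $P^+$ is non-monochromatic. Independently, I would colour $P^-$ with the \emph{same} palette using Theorem~\ref{wcfbthm2} applied to the reflection of $P^-$ across $e$, so that every reflected bottomless rectangle containing at least $4$ points of $P^-$ is non-monochromatic. Taking the union of the two colourings gives a single $2$-colouring of all of $P$.

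To verify that this is a $7$-proper colouring, I would take any $R\in{\cal B'}$ with $|R\cap P|\ge 7$. Since $R\cap P$ is the disjoint union of $R^+\cap P^+$ and $R^-\cap P^-$ and $7>3+3$, at least one of these two sets contains at least $4$ points. If $|R^+\cap P^+|\ge 4$, then by the choice of the colouring of $P^+$ together with the correspondence above, $R^+$ contains both a red and a blue point, so $R$ is non-monochromatic; the case $|R^-\cap P^-|\ge 4$ is symmetric. This proves $\chi_7({\cal B'})\le 2$. Since a single rectangle of ${\cal B'}$ can contain $7$ points and a $1$-colouring makes them all monochromatic, $\chi_7({\cal B'})\ge 2$ is trivial, giving the claimed equality.

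The argument is essentially routine once the two sides are decoupled; the only points that need care are the correspondence between the upper (resp.\ lower) part of a rectangle of ${\cal B'}$ and a (reflected) bottomless rectangle, which is precisely what lets Theorem~\ref{wcfbthm2} be invoked on each side, and the pigeonhole count $7=2\cdot 3+1$ that forces at least $4$ of the points onto a single side.
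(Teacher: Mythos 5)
Your proposal is correct and is essentially the paper's own proof: the paper likewise colors the points above and below the base-line separately with the two-coloring of Theorem~\ref{wcfbthm2}, and uses the pigeonhole bound $7 \ge 4+3$ to force at least $4$ points of any rectangle of ${\cal B'}$ onto one side. Your write-up merely spells out the reflection correspondence and the trivial lower bound, which the paper leaves implicit.
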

Indeed, if we color both sides with the same two colors according to Theorem \ref{wcfbthm2}, then any rectangle containing at least $7$ points contains $4$ point on one side, thus containing a red and blue one as well. The lower bounds for bottomless rectangles trivially hold for the case of $\cal B'$ as well, thus Claim \ref{folklore} implies that:
\begin{claim}
$\chi_2({\cal B'})\ge 3$. \label{bprimel}
\end{claim}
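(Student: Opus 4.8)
The plan is to obtain this lower bound for free from the corresponding lower bound for bottomless rectangles established in Claim \ref{folklore}, by checking that its three-point construction is also hard when the separating regions are only allowed to be rectangles meeting the common base-line. Recall that Claim \ref{folklore} shows $\chi_2({\cal B})=3$ using the points $(0,0)$, $(1,1)$ and $(2,0)$: each of the three pairs can be cut off from the remaining point by a suitable bottomless rectangle, so that $H_{\ge 2}(P,{\cal B})$ on these three points is a triangle and therefore requires $3$ colors. My whole task is to transfer each of these three separating bottomless rectangles to a member of $\cal B'$ that selects exactly the same pair of points.

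First I would fix the base-line $e$ strictly below all three points. Then, given any bottomless rectangle $B=\{(x,y): a<x<b,\ y<c\}$ that contains at least two of the three points, I note that its top edge $c$ lies above those points and hence above $e$. I would replace $B$ by the genuine axis-parallel rectangle $R=\{(x,y): a<x<b,\ d<y<c\}$, where $d$ is any level below $e$. Since $d<e<c$, the rectangle $R$ meets the base-line $e$ and thus belongs to $\cal B'$; and since all three points lie above $e>d$, the truncation changes nothing below, so $R\cap P=B\cap P$. In particular $R$ selects exactly the same pair of points that $B$ did.

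Applying this to each of the three separating rectangles shows that every size-$2$ hyperedge of $H_{\ge 2}(P,{\cal B})$ on the three points is also a hyperedge of $H_{\ge 2}(P,{\cal B}')$, i.e. $H_{\ge 2}(P,{\cal B})\subseteq H_{\ge 2}(P,{\cal B}')$ as hypergraphs on the same vertex set. Hence $\chi\big(H_{\ge 2}(P,{\cal B}')\big)\ge \chi\big(H_{\ge 2}(P,{\cal B})\big)=3$, and taking the maximum over point sets gives $\chi_2({\cal B}')\ge 3$, as claimed.

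The only point requiring care — and it is routine rather than a genuine obstacle — is the truncation step: one must confirm that cutting a bottomless rectangle off below $e$ produces a rectangle that actually intersects $e$ while preserving its trace on $P$. This is immediate once $e$ is placed below all points, so no essential difficulty arises and the bound follows directly from Claim \ref{folklore}.
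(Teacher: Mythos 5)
Your proof is correct and matches the paper's own argument: the paper simply remarks that lower bounds for bottomless rectangles ``trivially hold'' for $\cal B'$ and invokes Claim \ref{folklore}, which is exactly your transfer of the three-point construction. Your truncation step (cutting each bottomless rectangle off below the base-line so it becomes a member of $\cal B'$ with the same trace on the point set) is precisely the routine verification the paper leaves implicit.
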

\section{Half-planes} \label {sechp}

The family of all half-planes is denoted by $\cal H$. We prove exact bounds for ${\chi_k}({\cal H})$ and almost exact bounds for $\ol{\chi_k}({\cal H})$.

From now on we assume that there are no $3$ points on one line. It is easy to show that if this is not the case, then coloring the point set after a small perturbation gives a needed coloring for the original point set as well.
This way the vertices of the convex hull of a point set $P$ are exactly the points of $P$ being on the boundary of this convex hull.
\subsection{Coloring points}

The following lemma follows easily from the definition of the convex hull.
\begin{lemma} \label{hplemma}
Any half-plane $h$ containing at least one point of $P$ contains some vertex of the convex hull of $P$ too. Moreover, the vertices of the convex hull of $P$ contained by $h$ are consecutive on the hull.
\end{lemma}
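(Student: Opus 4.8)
The plan is to prove the two assertions separately, using only elementary convexity and the general position assumption (no three points collinear, hence at most two points of $P$ on any line).

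For the first assertion, I would write the (closed) half-plane as $h=\{x:\langle a,x\rangle\le c\}$; the open case is identical with a strict inequality. Suppose $h$ contains a point $p\in P$. Since every point of $P$ lies in the convex hull, $p$ is a convex combination $p=\sum_i\lambda_i v_i$ of the hull vertices $v_i$, with $\lambda_i\ge 0$ and $\sum_i\lambda_i=1$. Applying the linear functional gives $\langle a,p\rangle=\sum_i\lambda_i\langle a,v_i\rangle\le c$. If every hull vertex violated the defining inequality, i.e. $\langle a,v_i\rangle>c$ for all $i$, then the convex combination would also exceed $c$, a contradiction. Hence some vertex $v_i$ satisfies $\langle a,v_i\rangle\le c$, so $h$ contains a vertex of the convex hull.

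For the second assertion, I would use that a linear functional is bitonic along the cyclic order of the hull. List the hull vertices $v_1,\dots,v_m$ in their cyclic order around the boundary and set $f_i=\langle a,v_i\rangle$. Walking once around a convex polygon, $f$ attains its minimum and maximum at two vertices and is monotone along each of the two arcs joining them; consequently the cyclic sequence $(f_i)$ first increases and then decreases. For any threshold $c$, the sublevel set $\{i:f_i\le c\}$ of such a sequence is a consecutive (cyclic) block, which is exactly the claim that the hull vertices contained in $h$ are consecutive. An equivalent, more geometric formulation is that the bounding line of $h$ meets the boundary of the convex hull in at most two points and therefore splits it into one arc lying in $h$ and one arc lying outside $h$.

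The convex-combination computation is routine; the only place that needs care is the degenerate situation in the second assertion, where the bounding line of $h$ passes through hull vertices. Here the general position assumption keeps the analysis clean: the line contains at most two vertices, so at most two indices satisfy $f_i=c$, and assigning each of them to the inside or the outside arc does not break consecutiveness. I expect that verifying this boundary behavior, together with staying consistent about the open/closed convention for $h$, will be the main (and rather minor) obstacle.
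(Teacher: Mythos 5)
Your proof is correct. Note that the paper itself gives no proof of this lemma---it is asserted to follow ``easily from the definition of the convex hull''---so your write-up fills an omission rather than parallels an existing argument. Both halves are sound: the convex-combination argument for the first assertion is exactly the intended easy argument (a weighted average of numbers all exceeding $c$ exceeds $c$), and for the second assertion the unimodality (``bitonic'') property of a linear functional along the cyclic order of a convex polygon's vertices is a standard fact; if you wanted to make it self-contained, it follows from the monotone rotation of edge directions around the polygon: the edges along which $\langle a,\cdot\rangle$ increases are exactly those whose direction lies in a fixed open half-circle of directions, hence they form a consecutive block, so $f$ increases along one boundary arc and decreases along the other. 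The only point needing care, which you already flag, is the degenerate case where the bounding line of $h$ is parallel to (or contains) a hull edge; there the sequence $f_i$ has ties, but weak unimodality still makes every sublevel set $\{i : f_i \le c\}$ a cyclic interval, so consecutiveness survives, and the open/closed distinction only changes $\le$ to $<$ throughout.
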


\begin{Theorem} \label{hptheorem}
\item[(i)]
$\chi_2({\cal H})=4$, i.e., any finite set of points can be colored with $4$ colors such that any half-plane containing at least $2$ of them is not monochromatic, and $4$ colors might be needed.
\item[(ii)]
$\chi_2({\cal H},P)\leq 3$, except when $P$ has $4$ points, with one of them inside the triangle determined by the other $3$ points (see Figure \ref{fighpl}), in which case $\chi_2({\cal H},P)=4$.
\end{Theorem}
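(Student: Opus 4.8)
The plan is to reduce the statement to an ordinary graph-coloring problem on a planar \emph{conflict graph}, and then to run the entire analysis there. Call a pair $\{a,b\}\subseteq P$ a \emph{conflicting pair} if some half-plane contains exactly $a$ and $b$; by the separation theorem this happens precisely when the segment $ab$ is disjoint from $\mathrm{conv}(P\setminus\{a,b\})$, equivalently when $a$ and $b$ are the two extreme points of $P$ in some direction. Let $G$ be the graph on $P$ whose edges are the conflicting pairs. Then $\chi_2({\cal H},P)=\chi(G)$: any half-plane $h$ is determined by an outward normal direction, so $h\cap P$ is a ``top prefix'' of $P$ in that direction, and if $|h\cap P|\ge 2$ then by Lemma~\ref{hplemma} its two extreme points are the global top two in that direction, i.e.\ a conflicting pair lying inside $h$. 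Hence the coloring is $2$-proper exactly when every conflicting pair is bichromatic, i.e.\ when it is a proper coloring of $G$. In particular part (i) will follow from part (ii) together with one extremal example, so the real content is the case distinction in (ii).

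Next I would establish the structure of $G$. First, every convex-hull edge of $P$ is a conflicting pair, while two \emph{non-adjacent} hull vertices are never conflicting (the segment between them is crossed by the hull of the remaining points); thus the hull vertices induce exactly a cycle in $G$, and any further edge must use an interior point. Second, and crucially, $G$ is \emph{planar}: drawn with its vertices at the points of $P$ and its edges as straight segments, two edges on four distinct points cannot cross, since if $\{a,b\}$ is conflicting then $ab$ avoids $\mathrm{conv}(P\setminus\{a,b\})$ and hence avoids the other edge's segment, while edges sharing a vertex meet only there (no three points are collinear). Finally, in the exceptional configuration of Figure~\ref{fighpl}, a point $d$ inside the triangle $abc$, all six pairs are conflicting (each triangle side cuts off two vertices, and each of $ad,bd,cd$ is disjoint from the opposite side), so $G=K_4$ and $\chi(G)=4$. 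This simultaneously yields the lower bound ``$4$ colors may be needed'' in (i) and the exceptional value in (ii).

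For the upper bound $\chi(G)\le 3$ in all other cases I would use induction on $|P|$. The base cases $|P|\le 4$ are checked by hand, the only non-$3$-colorable one being the point-in-triangle configuration. For $|P|\ge 5$ the idea is to locate a convex-hull vertex $v$ with at most two conflicting partners, delete it, $3$-color $P\setminus\{v\}$ by induction, and then assign $v$ a third color avoiding its (at most two) neighbors. Two things must be arranged: that $P\setminus\{v\}$ is itself not the exceptional $4$-point configuration (automatic once $|P|\ge 6$, and forceable by the choice of $v$ when $|P|=5$), and that $v$ genuinely has conflict-degree at most $2$, i.e.\ that deleting $v$ exposes no interior point as a new vertex of $\mathrm{conv}(P\setminus\{v\})$.

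The main obstacle is exactly this last point: showing that, outside the exceptional configuration, some hull vertex has no interior conflicting partner. The two hull neighbors of $v$ always contribute its cycle edges, so the extra conflicting partners of $v$ are precisely the first-layer interior points that surface on $\mathrm{conv}(P\setminus\{v\})$ once $v$ is removed. I expect the heart of the proof to be a charging/extremal argument: each first-layer point is exposable only by deleting hull vertices from a contiguous arc, and distributing these exposures over the hull should force some hull vertex to expose none, \emph{unless} the hull is a single triangle carrying one interior point, which is exactly the collapse to Figure~\ref{fighpl}. Making this charging precise, and handling the boundary case $|P|=5$ so that the induction never lands on the exceptional $4$-point set, is where the real work lies; everything else, including the derivation of (i) and the consistency with the two-color bound for $k=3$ in Theorem~\ref{hptheorem2}, is then routine.
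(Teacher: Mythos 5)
Your reduction to the conflict graph $G$ is sound: the equivalence $\chi_2({\cal H},P)=\chi(G)$, the fact that the hull vertices induce exactly a cycle, the straight-line planarity of $G$, and the identification of the exceptional configuration with $K_4$ are all correct, and up to that point you are walking parallel to the paper (which phrases conflicts as ``$q$ can be cut out with $p$'' rather than building a graph). The problem is the step you yourself flag as the heart of the matter, and it is not merely unfinished --- the lemma you propose to prove there is false. Take the square $a=(0,0)$, $b=(1,0)$, $c=(1,1)$, $d=(0,1)$ with four interior points $(0.1,0.1)$, $(0.9,0.1)$, $(0.9,0.9)$, $(0.1,0.9)$. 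In the direction $(-1,-1)$ the two extreme points of $P$ are $a$ and $(0.1,0.1)$, so they conflict, and by symmetry \emph{every} hull vertex has an interior conflicting partner, although this is nowhere near the exceptional configuration; so no hull vertex has conflict-degree $2$ and your induction has no vertex to delete. The same already happens at $n=5$: for the triangle $a=(0,0)$, $b=(1,0)$, $c=(0.5,1)$ with interior points $p=(0.4,0.4)$, $q=(0.6,0.4)$, the conflict graph has edges $pa$, $pc$, $qb$, $qc$ besides the hull triangle, so the only vertices of degree at most $2$ are $p$ and $q$ --- and deleting either one lands exactly on the exceptional $4$-point set, which admits no $3$-coloring to extend. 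Hence the $n=5$ boundary issue is not routine either: on that configuration the vertex-deletion induction has no valid move at all.

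The fix --- and this is what the paper actually does --- is to prove the degree bound on the other side of the bipartition: outside the exceptional configuration, every \emph{interior} point has at most two hull-vertex conflicting partners. (If $q_i$ can be cut out together with $p$, then $p$ lies in the triangle $q_{i-1}q_iq_{i+1}$; when the hull has more than $3$ vertices no three such triangles share an interior point, and when the hull is a triangle, $p$ conflicting with all three corners forces three quadrilaterals around $p$ to be empty, i.e.\ forces exactly the configuration of Figure \ref{fighpl}.) With that lemma no induction is needed: $3$-color the hull cycle so that adjacent vertices differ (this handles every half-plane containing two hull vertices, by Lemma \ref{hplemma}), then give each interior point a color avoiding its at most two partners; interior points never conflict with one another since every half-plane meeting $P$ contains a hull vertex. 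This also shows why your planarity observation, though true, cannot carry the load: planarity alone only yields four colors, and the structure that actually yields three is that $G$ is a cycle plus an independent set of vertices of degree at most two.
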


\begin{figure}
    \centering
    \subfigure[The exceptional case of Theorem \ref{hptheorem}(ii)]{\label{fighpl}
        {\epsfig{file=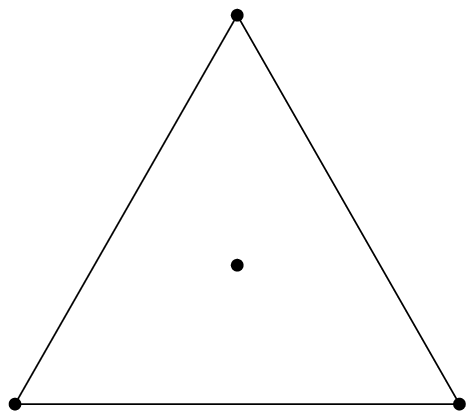,width=5cm,silent=}}}
        \hspace{10mm}
    \subfigure[The proof of Theorem \ref{hptheorem}(ii)]{\label{fighpproof}
        {\epsfig{file=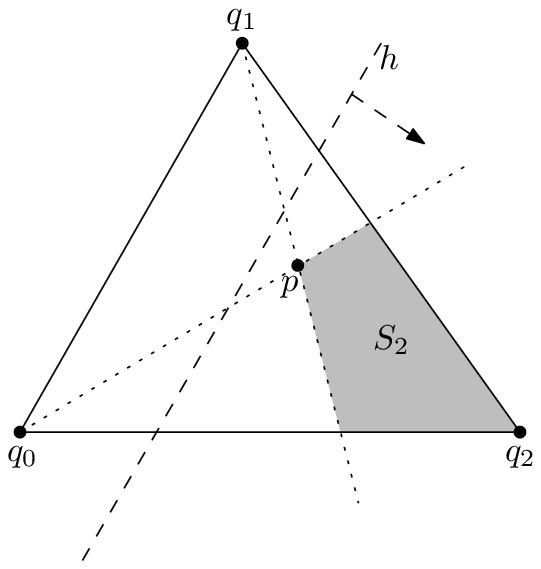,width=5cm,silent=}}}
   \caption{Theorem \ref{hptheorem}(ii)}
   \label{fighp}
\end{figure}

\begin{proof}
$(i)$
This follows from $(ii)$, yet we give a short proof for the upper bound.
Color the vertices of the convex hull of $P$ with $3$ colors such that there are no $2$ vertices next to each other on the hull with the same color. Color all the remaining points with the $4$th color. This coloring is good as by Lemma \ref{hplemma} any half-plane containing at least two points contains two neighboring vertices on the hull or one vertex on the hull and one point inside.

$(ii)$
Clearly, in the case when $P$ has $4$ points, with one of them inside the triangle determined by the other $3$ points (we denote this special case by $P^*$), we need four colors to have a $2$-proper coloring as any two points can be contained by some half-plane not containing the rest of the points.

As $\cal H$ is monotone, by Observation \ref{monotone} it is enough to consider half-planes containing exactly $2$ points of $P$. I.e., we need a coloring such that for any $h$ containing exactly two points, $h$ is not monochromatic. We color the vertices of the convex hull with $3$ colors as in (i) such that there are no $2$ vertices next to each other on the hull with the same color. At this time every half-plane $h$ containing two vertices of the convex hull is already not monochromatic, because by Lemma \ref{hplemma} such a $h$ contains two neighboring vertices also, which do not have the same color. Now we color the points inside the hull in a more clever way than in (i). Take an arbitrary point $p$ inside the hull. The only case when a half-plane $h$ contains exactly one point of $P$ besides $p$, is when $h$ contains only $p$ and one vertex $q$ of the convex hull. In such a case we say that $q$ {\em can be cut out} with $p$ (by $h$). 
If this can happen only with two vertices of the hull, then coloring $p$ different from these two points, all half-planes containing $p$ will be not monochromatic. Doing the same for every inside point we get a $2$-proper coloring of $P$.

Denote the vertices of the convex hull by $q_0,...,q_{k-1}$ in clockwise order (indexes are mod $k$). Given a $p$, we prove that there are no $3$ vertices on the hull that can be cut out with $p$ except for the special case $P^*$. First, notice that if  $q_i$ can be cut out with $p$, then $p$ is inside the triangle $T_i=q_{i-1}qq_{i+1}\Delta$. It is easy to see that if the hull has more then $3$ vertices, then there are no $3$ such triangles having a common inner point. For the rest of the proof see Figure \ref{fighpproof}. If the hull has $3$ vertices and $p$ can be cut out with all these $3$ vertices, then regard the lines going through some $q_i$ and $p$ partitioning the triangle into $6$ triangles. For each vertex $q_i$, denote the union of the two triangles having it as a vertex by $S_i$. Thus, we have three quadrilaterals, all of which must be empty. Indeed, e.g. $q_2$ can be cut out with $p$ by a half-plane $h$. By definition $h$ is not containing any other point of $P$, yet $h$ always contains the quadrilateral $S_2$, and so $S_2$ must be empty. The same argument for the other two quadrilaterals shows that all of them must be empty and so $p$ is the only point in the triangle, which is the excluded case $P^*$.
\end{proof}

\begin{Theorem} \label{hptheorem2}
$\chi_3({\cal H})=2$, i.e., any finite set of points can be colored with $2$ colors such that any half-plane containing at least $3$ of them is not monochromatic.
\end{Theorem}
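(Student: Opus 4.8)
The plan is to reduce to half-planes containing exactly three points and then to build the $2$-coloring by induction, peeling off one extreme point at a time.

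First I would invoke the monotonicity of $\cal H$ (Observation \ref{monotone}): any half-plane with at least $3$ points contains a sub-half-plane with exactly $3$ of them, so it suffices to guarantee that every half-plane cutting off \emph{exactly} $3$ points is bichromatic. It is convenient to record the reformulation, in the spirit of Lemma \ref{hplemma}, that a triple is cut off by a half-plane precisely when, for some direction $\theta$, its points are the three with largest projection onto $\theta$ — the ``top three'' in direction $\theta$. An easy preliminary observation is that a point on the fourth or a deeper convex layer is never among the top three in any direction (each of the three outer layers contributes a point above it), so such points may be colored arbitrarily; only points of the first three convex layers are constrained.

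For the induction I would remove the lowest point $v$ of $P$ (a convex-hull vertex) and $2$-color $P\setminus\{v\}$ by the inductive hypothesis. The heart of the argument is a structural lemma about the triples $v$ can complete. As $\theta$ rotates, the set of directions for which $v$ is among the top three is a single arc (for the lowest point the number of points above $v$ changes monotonically on each side of the straight-down direction), and along this arc the \emph{companion pair} of $v$ — its two fellow top points — changes one element at a time. Hence the companion pairs are exactly the consecutive pairs $\{y_{j-1},y_j\}$ along a path $y_0,y_1,\dots,y_m$, and the union of two consecutive companion pairs $\{y_{j-1},y_j,y_{j+1}\}$ is itself a top-three triple of $P\setminus\{v\}$ (deleting the lower point $v$ shifts every rank up by one). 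By the inductive hypothesis each such triple is bichromatic, so \emph{no three consecutive vertices of the companion path are monochromatic}.

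It remains to color $v$. The only triples that can fail are $\{v,y_{j-1},y_j\}$ with $\{y_{j-1},y_j\}$ monochromatic; if all monochromatic companion pairs share one color we give $v$ the opposite color and finish. The main obstacle is the remaining case, where the path already carries a red-red pair \emph{and} a blue-blue pair (the ``no three consecutive monochromatic'' condition permits patterns such as $RRBB\dots$), so no choice for $v$ works without recoloring, and the $y_j$ also lie in many other top-three triples, so colors cannot be flipped freely. I would resolve this by recoloring a contiguous segment of the companion path, sweeping from one endpoint and flipping colors only as forced, and arguing — by a secondary induction along the path, using the inherited bichromatic property of the consecutive triples — that the repair eliminates all monochromatic companion pairs of one color while creating no new monochromatic top-three triple elsewhere. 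Showing that this local recoloring stays globally consistent is the technical crux of the whole proof; the base case of the outer induction (at most two points) is trivial.
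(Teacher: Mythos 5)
Your setup is fine: the reduction via Observation \ref{monotone}, the ``top three in direction $\theta$'' reformulation, and the analysis showing that the companion pairs of the lowest point $v$ sweep out a path whose consecutive triples are themselves top-three triples of $P\setminus\{v\}$ are all correct (and the convex-layer remark, while unnecessary, is also true). But the proof has a genuine gap at exactly the step you yourself flag as the crux. When the companion path contains both a red-red pair and a blue-blue pair (which your ``no three consecutive monochromatic'' property permits, e.g.\ $RRBB$), no color for $v$ works, and you propose to repair the coloring by flipping colors along a contiguous segment of the path. The inductive hypothesis cannot support this: it only asserts that \emph{some} valid $2$-coloring of $P\setminus\{v\}$ exists, and gives you no structural control over it. Each $y_j$ typically lies in many top-three triples of $P\setminus\{v\}$ that do not involve $v$ at all, so flipping its color can make such a triple monochromatic; repairing that breakage forces further flips, and nothing in the hypothesis bounds or localizes the cascade, nor even guarantees that any repair confined to the companion path exists. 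The $RRBB$ pattern shows the plain statement is not inductively self-sustaining: you must either strengthen the induction hypothesis or abandon induction.

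The paper does the latter, with an explicit global construction rather than peeling off a point. All points interior to the convex hull are colored blue; a hull vertex $q_i$ is colored red whenever the triangle $T_i=q_{i-1}q_iq_{i+1}\Delta$ contains a point of $P$; the remaining stretches of hull vertices between red ones are colored alternately, with size-one stretches blue (and a parity adjustment when every $T_i$ is empty). This yields the invariant that no two consecutive hull vertices are blue, and then a short case analysis over half-planes with exactly three points (at least two consecutive hull vertices; three hull vertices and no interior point; one hull vertex plus two interior points, which forces that vertex's triangle to be nonempty, hence red) finishes the proof. Note that this construction even satisfies the stronger property of Observation \ref{hpobs} (no half-plane cuts off exactly two blue points), which is the kind of invariant your inductive scheme would need to carry to survive the recoloring step --- at which point you would essentially have reconstructed the paper's coloring.
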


\begin{proof}
As $\cal H$ is monotone, it is enough to consider half-planes containing exactly $3$ points of $P$. We color the points with colors red and blue.
The points inside the convex hull of $P$ are colored blue.
The vertices of the convex hull of $P$ are denoted by $q_0,\ldots,q_{k-1}$ in clockwise order. For each $q_i$ we assign $T_i=q_{i-1}q_iq_{i+1}\Delta$, where indexes are modulo $k$. If $T_i$ has some point of $P$ inside it, then color $q_i$ red. 

If there are no nonempty $T_i$'s then color the vertices of the convex hull with alternating colors, if its size is odd, then with the exception of two neighboring red points. 
If there is at least one nonempty $T_i$ then these red points cut the boundary of the convex hull into chains. For each chain color its vertices with alternating colors, a chain with size one is colored blue. This coloring has the following property.

\begin{obs}
There are no $2$ consecutive blue vertices on the convex hull.
\end{obs}
We prove that the  coloring defined above is a $3$-proper coloring. Take again an arbitrary half-plane $h$ containing exactly $3$ points. We prove that it contains a red and a blue point too. By Lemma \ref{hplemma} $h$ contains some consecutive vertices of the convex hull of $P$. 
\item [Case 1.] {\em If $h$ contains at least two consecutive vertices on the hull}, then it contains at least one red point. 
\item [Case 1a.] {\em If $h$ contains at least one point inside the hull}, then it contains at least one blue point. 
\item [Case 1b.] {\em If $h$ contains three vertices of the convex hull but no points inside}, then it is easy to see that the triangle corresponding to the middle point $q$ in the ordering must be empty. So $q$ belongs to some alternatingly colored chain. If any of its neighbors $q'$ corresponds to the same chain, then $h$ contains the points $q$ and $q'$ that have different colors by definition. If $q$ is a chain of size $1$ then it is blue and its neighbors are red, again good.
\item [Case 2.] {\em If $h$ contains one vertex $q_i$ of the convex hull and two points of $P$ inside the convex hull}, then the latter points are blue and they must be in $T_i$, that is $q_i$ is red. Thus $h$ contains red and blue points as well.
\end{proof}

\begin{obs} \label{hpobs}
The algorithm in the proof of Theorem \ref{hptheorem2} gives a coloring which additionally guarantees that there are no half-planes containing exactly two points, both of them blue.
\end{obs}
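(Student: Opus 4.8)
The plan is to reduce the claim to the two structural possibilities for a half-plane containing exactly two points, and then rule each one out using facts already established. By Lemma~\ref{hplemma}, a half-plane $h$ that contains at least one point of $P$ contains a consecutive block of convex-hull vertices, possibly together with some interior points; in particular it always contains at least one hull vertex. Hence, if $h$ contains exactly two points of $P$, there are only two cases: either (A) $h$ contains two hull vertices (necessarily adjacent, since the hull vertices in $h$ are consecutive) and no interior point, or (B) $h$ contains exactly one hull vertex $q_i$ together with exactly one interior point $p$.

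In case (A) the two points are adjacent on the hull, so the observation proved inside Theorem~\ref{hptheorem2} that there are no two consecutive blue vertices on the convex hull immediately shows they cannot both be blue.

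In case (B) I would argue exactly as in the proof of Theorem~\ref{hptheorem}(ii): since $h$ contains $q_i$ but neither $q_{i-1}$ nor $q_{i+1}$, while $p$ lies inside the hull and inside $h$, the interior point $p$ must lie in the triangle $T_i=q_{i-1}q_iq_{i+1}\Delta$. This is precisely the situation where $q_i$ ``can be cut out'' with $p$. Consequently $T_i$ is nonempty, so by the coloring rule of Theorem~\ref{hptheorem2} the vertex $q_i$ is colored red, and therefore $h$ is not monochromatically blue. Combining the two cases yields the statement.

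The main (and essentially only) obstacle is verifying the geometric characterization underlying case (B): that a half-plane cutting off exactly $q_i$ together with a single interior point forces that interior point to lie in $T_i$. This is exactly the ``can be cut out'' analysis already carried out in the proof of Theorem~\ref{hptheorem}(ii), so I would invoke it rather than redo it. Everything else is a direct appeal to the no-two-consecutive-blue-vertices property and to the red-coloring rule for vertices with nonempty $T_i$, so no additional computation is required.
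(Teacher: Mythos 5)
Your proof is correct and matches the argument implicit in the paper: the paper states this observation without a separate proof, and the intended justification is exactly your two-case split, which mirrors Cases 1b and 2 in the proof of Theorem~\ref{hptheorem2} itself (two consecutive hull vertices cannot both be blue, and a half-plane cutting off a single hull vertex $q_i$ together with an interior point forces that point into $T_i$, making $q_i$ red by the coloring rule). Your appeal to Lemma~\ref{hplemma} to rule out the two-interior-points case and to the ``cut out'' analysis of Theorem~\ref{hptheorem}(ii) for case (B) is exactly what the paper relies on.
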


\begin{claim} \label{hptheorem3}
Colorings guaranteed by Theorem \ref{hptheorem} and Theorem \ref{hptheorem2} can be found in $O(n \log n)$ time.
\end{claim}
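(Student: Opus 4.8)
The plan is to show that, for both theorems, every step beyond sorting and computing the convex hull is either linear or reduces to a single $O(\log n)$-time geometric query per point.

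First I would compute the convex hull of $P$ in $O(n\log n)$ time (e.g.\ by Graham scan), which simultaneously produces the hull vertices $q_0,\dots,q_{k-1}$ in cyclic order and identifies the interior points. The proper $3$-coloring of the hull vertices used in both theorems --- no two cyclically adjacent vertices sharing a color --- is just an alternating coloring of a cycle with one defect allowed for odd length, computable in a single $O(k)$ pass; likewise the partition of the hull into chains and their alternating coloring in Theorem~\ref{hptheorem2} is one more $O(k)$ pass once the red vertices are known, and assigning the common extra color (Theorem~\ref{hptheorem}(i)) or blue (Theorem~\ref{hptheorem2}) to interior points is linear. So the only nontrivial ingredient is the geometric data driving the interior rule: in Theorem~\ref{hptheorem}(ii) I need, for each interior point $p$, the set of hull vertices that can be cut out with $p$ (which by the proof has size at most $2$ outside the exceptional case $P^*$), and in Theorem~\ref{hptheorem2} I need, for each vertex $q_i$, whether the ear $T_i=q_{i-1}q_iq_{i+1}\Delta$ contains a point of $P$.

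Both requests amount to enumerating the incidences between interior points and the ears $T_i$. A point $p$ lies in $T_i$ exactly when it lies beyond the chord $q_{i-1}q_{i+1}$ on the side of $q_i$, and by the argument in the proof of Theorem~\ref{hptheorem} each interior point lies in at most two ears once $k\ge 4$ (the cases $k\le 3$ are trivially $O(n)$); hence the total number of incidences is only $O(n)$. To find them quickly I would fix $q_0$ and triangulate the hull into the fan $F_j=q_0q_jq_{j+1}\Delta$, and locate each interior point $p$ in this fan by an angular binary search about $q_0$ in $O(\log n)$ time. The observation making this sufficient is that if $p$ lies strictly inside $F_j$ then $p$ can lie in $T_i$ only for $i\in\{0,j,j+1\}$: for $1\le i\le j-1$ all three vertices of $T_i$ have indices in $[0,j]$, so $T_i$ lies in the convex polygon $q_0q_1\cdots q_j$, which sits on the opposite side of the diagonal $q_0q_j$ from $F_j$, and the range $j+2\le i\le k-1$ is excluded symmetrically via $q_0q_{j+1}$. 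Thus, after the search, I test membership of $p$ in a constant number of candidate ears in $O(1)$ each. Running this for every interior point records, for each $p$, the ($\le 2$) vertices it cuts out (answering Theorem~\ref{hptheorem}(ii)) and marks every ear receiving an incidence as nonempty (answering Theorem~\ref{hptheorem2}); the total cost is $O(n\log n)$.

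The main obstacle is precisely this last step: a naive emptiness test for the $k$ ears against the $n$ points costs $\Theta(nk)$, which can be quadratic, so the whole efficiency rests on the two facts above --- that each interior point meets at most two ears (bounding the output size linearly) and that fan location confines the candidate ears for a located point to a constant set (so each incidence is discovered in logarithmic time). With these in hand the remaining bookkeeping is linear, and summing the contributions yields the claimed $O(n\log n)$ bound for both colorings.
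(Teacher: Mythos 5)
Your route is genuinely different from the paper's. The paper obtains both the ear--emptiness data (for Theorem~\ref{hptheorem2}) and the cut-out pairs (for Theorem~\ref{hptheorem}(ii)) from the dynamic convex hull structure of \cite{brodal}: it temporarily deletes each hull vertex $q_i$ and records which interior points appear as new hull vertices --- these are exactly the points with which $q_i$ can be cut out --- and the total cost is $O(n\log n)$ amortized because each interior point appears at most twice. Your static alternative (fan triangulation from $q_0$, binary search to locate each interior point in a fan triangle $F_j$, plus the observation that a point strictly inside $F_j$ can only lie in the ears $T_0$, $T_j$, $T_{j+1}$) is more elementary and avoids the heavy data structure. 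For a hull with at least $4$ vertices it is correct: ear containment is a superset of the cut-out relation (the paper itself notes that if $q_i$ can be cut out with $p$ then $p\in T_i$), each interior point lies in at most two ears, and coloring an interior point away from the vertices of \emph{all} ears containing it is only a stronger constraint than what Theorem~\ref{hptheorem}(ii) needs, so both colorings come out; your exclusion argument via the diagonals $q_0q_j$ and $q_0q_{j+1}$ is sound.

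There is, however, a genuine gap in the case $k=3$ (hull is a triangle), which you dismiss as ``trivially $O(n)$.'' For Theorem~\ref{hptheorem2} that case is indeed trivial, since all three ears coincide with the hull. But for Theorem~\ref{hptheorem}(ii) it is not: when the hull is a triangle, \emph{every} interior point lies in all three ears, so ear containment carries no information, and with the three hull vertices necessarily receiving three distinct colors you cannot color an interior point away from all its candidate vertices. The coloring genuinely requires the cut-out relation, which in this case depends on the positions of the \emph{other interior points}: $q_i$ can be cut out with $p$ if and only if $p$ is a vertex of the convex hull of $P\setminus\{q_i\}$, and this is not determined by any constant amount of local data about $p$. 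Your algorithm as stated computes nothing here, and the claimed $O(n)$ bound for this subcase is unsupported. The gap is fixable within the stated bound --- for $k=3$ compute the three static hulls of $P\setminus\{q_i\}$, $i=0,1,2$, in $O(n\log n)$ total time, and read off for each interior point the (at most two, outside the exceptional configuration $P^*$) vertices it can be cut out with --- but this step, or something equivalent to it, must be added for the proof to be complete.
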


\begin{proof}
The algorithm in Theorem \ref{hptheorem} $(i)$ clearly works in $O(n\log n)$, the same as building the convex hull.
For the other two algorithm we need the dynamic convex hull algorithm presented in \cite{brodal}.

For the algorithms in Theorem \ref{hptheorem2} we first compute a convex hull in $O(n\log n)$ amortized time and then we take its points one by one and do the following. Delete temporarily the convex hull vertex $p$, compute the new convex hull temporarily, if it has some new vertices on it, then the triangle corresponding to $p$ is not empty. As any inner point has been added and deleted from the set of vertices of the hull at most two times and the convex hull algorithm makes a step in $O(\log n)$ amortized time, we could decide in $O(n\log n)$ time which vertices of the hull have empty triangles. After that the coloring of the vertices of the hull and the inside points takes $O(n)$ time, $O(n\log n)$ altogether.

For the algorithm in Theorem \ref{hptheorem} $(ii)$ we do the same just when we temporarily delete $p$ we assign to any additional convex hull vertex the point $p$, as this vertex can be cut out by a half-plane together with $p$. After these we simply color the vertices of the convex hull as needed and all the inner points with a color different from the color of the at most two convex hull vertices assigned to it. Altogether this is again $O(n\log n)$ time.
\end{proof}

\subsection{Coloring half-planes}

First we introduce some tools necessary for the proofs of this section.
We can assume that there are no half-planes with vertical boundary line.
We dualize the half-planes and points of the plane $S$ with the points (with an additional orientation) and lines of plane $S'$, then we color the set of directed points corresponding to the half-planes which will give a good coloring of the original family of half-planes. The dualization is as follows. For a half-plane $h$ with a boundary line given by the equality $y=ax+b$ the corresponding dual point $h^*$ has coordinates $(a,b)$. If this line is a lower boundary, then $h^*$ has {\em orientation} {\em north}, otherwise it has orientation {\em south}. For an arbitrary point $p$ with coordinates $(c,d)$ the corresponding line $p^*$ is given by $y=-cx+d$. Now it is easy to see that $h$ contains $p$ on the primal plane if and only if the vertical ray starting in $h^*$ and going into its orientation meets line $p^*$ (we say that $h^*$ and $p^*$ {\em see each other} or $h^*$ is {\em looking at} $p^*$). Indeed, for a half-plane with lower boundary both hold if and only if $d>ac+b$, for a plane with an upper boundary both hold if and only if $d<ac+b$. From these it follows that:
\begin{obs} \label{dualobs}
The $k$-proper coloring of half-planes is equivalent to a coloring of the dual set of oriented points such that any line with at least $k$ points looking at it, there are at least two among these at least $k$ points with different colors.
\end{obs}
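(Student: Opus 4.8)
The plan is to prove the equivalence by a direct translation through the point--line duality set up above, treating it as a dictionary rather than a computation. The duality $h \mapsto h^*$, $p \mapsto p^*$ is a bijection between the half-planes (none of which has a vertical boundary) and the oriented points of $S'$, and between the points $p$ of the plane $S$ and the non-vertical lines of $S'$ (note every $p^* : y = -cx+d$ is non-vertical, and every such line arises this way). A coloring of the half-planes is therefore literally the same data as a coloring of the dual oriented points: assign $h^*$ the color of $h$. So it remains only to check that the \emph{containment} relation on the primal side coincides with the \emph{looking at} relation on the dual side, after which the defining condition of a $k$-proper coloring transfers verbatim.

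First I would verify the incidence correspondence $h \ni p \iff h^* \text{ looks at } p^*$ by the coordinate computation already indicated. Write $h^* = (a,b)$ and $p = (c,d)$, so that $p^*$ is the line $y = -cx + d$, which the vertical line $x = a$ through $h^*$ meets at height $d - ac$. If $h$ has lower boundary $y = ax + b$ (orientation north), then $p \in h$ iff $d > ac + b$, which is exactly the condition that the intersection point $(a, d-ac)$ lies strictly above $h^* = (a,b)$, i.e. that the upward ray from $h^*$ meets $p^*$. If $h$ has upper boundary (orientation south), the same computation with the reversed inequality $d < ac + b$ says that $(a, d-ac)$ lies strictly below $h^*$, i.e. the downward ray from $h^*$ meets $p^*$. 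In both cases $h \ni p$ exactly when $h^*$ looks at $p^*$.

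Finally I would assemble the equivalence by transporting a fixed coloring along the bijection. For a point $p$ of the plane, the half-planes of the family containing $p$ are, by the previous step, precisely the dual oriented points looking at the line $p^*$, and these carry exactly the same colors. Hence the statement ``$p$ is contained in at least $k$ half-planes, not all of the same color'' holds for every $p$ if and only if ``every line with at least $k$ oriented points looking at it has two among them of different colors'' holds; since the lines $p^*$ are exactly the images of the points $p$, quantifying over all points of the plane matches quantifying over all the relevant lines, which is the claim.

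The argument is essentially bookkeeping, so I do not expect a genuine obstacle; the only points requiring care are the orientation labels (north/south must be matched with lower/upper boundary) and the strictness of the inequalities, both of which are handled cleanly by the standing assumptions that no half-plane has a vertical boundary line and that the configuration is in general position.
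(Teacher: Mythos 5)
Your proposal is correct and matches the paper's own argument: the paper proves this observation inline (just before the statement) by the very same coordinate check, namely that for a lower-boundary (north) half-plane $p\in h$ iff $d>ac+b$ iff the upward ray from $h^*=(a,b)$ meets $p^*$, and symmetrically with $d<ac+b$ for the upper-boundary (south) case, after which the coloring transfers along the bijection exactly as you describe. Your write-up is only slightly more explicit about the bookkeeping (e.g.\ that every non-vertical line of the dual plane arises as some $p^*$), which is a harmless refinement of the same proof.
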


All the proofs give colorings for directed points and from now on we assume that there are no $3$ directed points on one line. It is easy to show that if this is not the case, then coloring the set of directed points after a small perturbation gives a needed coloring for the original set of directed points as well.

\begin{Theorem} \label{halfplanes}
$\ol{\chi_2}({\cal H})=3$, i.e., any finite family of half-planes can be colored with $3$ colors such that any point contained by at least $2$ of them is not monochromatic.
\end{Theorem}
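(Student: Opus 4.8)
The plan is to prove the two inequalities separately: the lower bound $\ol{\chi_2}({\cal H})\ge 3$ by an explicit construction, and the upper bound $\ol{\chi_2}({\cal H})\le 3$ by an inductive coloring carried out in the dual, closely following the pattern of Theorem \ref{dualbrect}.

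For the lower bound I would take three \emph{upward} half-planes $h_1,h_2,h_3$ whose bounding lines form a triangle. For each pair $\{h_i,h_j\}$ there is a point lying above both of their bounding lines but below the third line, hence covered by exactly these two half-planes; consequently in any $2$-proper coloring the three half-planes must receive pairwise distinct colors, so $3$ colors are sometimes necessary. This is the half-plane analogue of the construction in Figure \ref{bexample}.

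For the upper bound I would pass to the dual picture of oriented points and lines supplied by Observation \ref{dualobs}: it suffices to color a finite set of north/south oriented points with three colors so that every line that is looked at by at least two of them sees two colors. I would insert the oriented points one at a time in a suitable sweep order (by the slope/offset data that plays here the role the top-edge height played for bottomless rectangles), maintaining two invariants in the spirit of IH \ref{ih3} and IH \ref{ih4}: first, that the current partial coloring is already $2$-proper; and second, an auxiliary "no forbidden color" condition on the lines that currently see exactly one point, which is precisely what lets a newly inserted point be absorbed without creating a monochromatic looked-at pair. After each insertion I would restore the auxiliary invariant by a divide-and-color recoloring identical in form to Lemma \ref{recolorclaim}: locate a line seeing a single point, and swap the two non-critical colors on everything lying to one side of it, which preserves $2$-properness while repairing the auxiliary condition on that side.

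The hard part, and where this proof departs from Theorem \ref{dualbrect}, is that the half-plane dual has no literal base-line onto which the configuration projects to a clean one-dimensional interval structure, so the auxiliary invariant must be formulated intrinsically. I expect the main obstacle to be twofold: (i) choosing the sweep direction and the distinguished "critical" color so that north- and south-oriented points are treated uniformly, since a single line combines the points below it (north) and above it (south); and (ii) verifying that the divide-and-color swaps terminate and never reintroduce a monochromatic pair. A further subtlety worth flagging is that, unlike in the primal monotone setting, the minimal covering configurations here are not all of size two: three half-planes can share a private cell with no two of them sharing one, so the argument must genuinely certify non-monochromaticity of such triples rather than reduce cleanly to properly coloring a graph of "private pairs." Absorbing this triple case into the same induction is what I would expect to require the most care.
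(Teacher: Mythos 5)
The substance of the theorem is the upper bound, and there you have not given a proof: you announce a strategy (transplant the insertion plus ``divide and color'' induction of Theorem \ref{dualbrect} into the dual) and then candidly list, as ``expected obstacles,'' precisely the ingredients that you never supply. The induction of Theorem \ref{dualbrect} works because of structure that has no counterpart here: the auxiliary invariant IH \ref{ih4} is a statement about \emph{base-line} points; its preservation uses that the newly inserted rectangle is the lowest one, so that membership of any point in the new rectangle is decided by projecting to the base-line; and Lemma \ref{recolorclaim} is sound only because ``strictly left of $q$'' and ``strictly right of $q$'' split the family into parts such that no point is covered by rectangles from both open sides, so a transposition of two colors on one side cannot create a monochromatic covered point. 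In the dual picture of oriented north/south points (Observation \ref{dualobs}) you specify no sweep order, no intrinsic replacement for IH \ref{ih4}, and no separation structure making a one-sided color swap safe; without these, the proposal is a plan, not an argument. For comparison, the paper's proof is not an induction at all: it takes the lower convex hull of the north-directed points (the $p$-path) and the upper convex hull of the south-directed points (the $q$-path), shows (as in Lemma \ref{hplemma}) that the hull points seen by any line are consecutive along each path, forms the graph on the two paths with an extra edge $p_iq_j$ whenever some line sees only these two hull points, proves this graph has a crossing-free drawing between two parallel lines (a caterpillar between two paths, hence outerplanar, hence $3$-colorable), and finally colors every inner point with a color different from its two ``guard'' hull points, one of which any line through it must also see. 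Your worry about private triples is thereby dissolved rather than confronted: a line seeing three or more points either sees two consecutive hull points of one path (differently colored) or sees an inner point together with one of its guards.

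Your lower bound also has a flaw as stated: it is not true that for \emph{every} triangle formed by the boundaries of three upward half-planes each pair covers a point privately. With slopes $m_1<m_2<m_3$, the wedge $h_1\cap h_3$ escapes $h_2$ only if the apex $\ell_1\cap\ell_3$ lies strictly below $\ell_2$; for instance, for $h_1:y\ge x$, $h_2:y\ge 0$, $h_3:y\ge -x$ no point is covered by exactly $h_1$ and $h_3$, since $\{y\ge x\}\cap\{y\ge -x\}=\{y\ge |x|\}\subseteq\{y\ge 0\}$. The construction is easily repaired (take $y\ge x$, $y\ge -x$, $y\ge 1$, whose pairwise private regions are all nonempty), so $\ol{\chi_2}({\cal H})\ge 3$ does hold, but the positioning condition must be stated and checked rather than asserted for an arbitrary triangle.
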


\begin{proof}[Proof of Theorem \ref{halfplanes}.]
\begin{figure}
    \centering
    \subfigure[Construction for Theorem \ref{halfplanes}]{\label{fighpproof2}
    {\epsfig{file=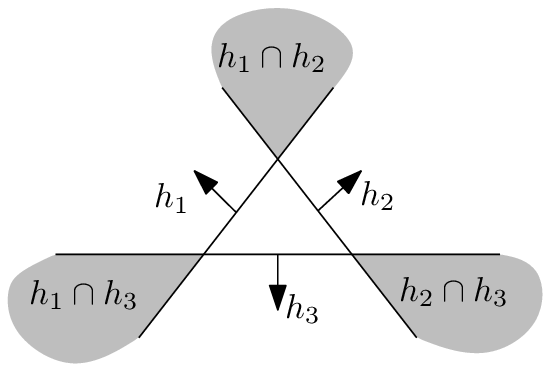,width=6cm,silent=}}}
        \hspace{10mm}
		\subfigure[Proof of Theorem \ref{halfplanes}]{\label{fighpproof3}
    {\epsfig{file=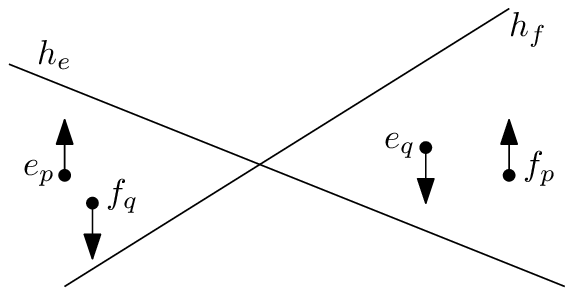,width=6cm,silent=}}}
   \caption{Theorem \ref{halfplanes}(i)}
   \label{hpdualfigs}
\end{figure}
For a construction proving that $3$ colors might be needed, see Figure \ref{fighpproof2}. Here every pair of half-planes has a point contained by exactly these two half-planes, thus all $3$ half-planes must have different colors in a $2$-proper coloring.

For the upper bound, by Observation \ref{dualobs} we can color directed points with respect to lines. Given a set of directed points $P$ we have to color $P$ with $3$ colors such that for any line $h$ seeing at least $2$ points, not all of these points have the same color.
Take the lower boundary of the convex hull of the set of north-directed points and denote the vertices of it by $p_1,p_2,\ldots,p_k$ ordered by their $x$-coordinate. Take the upper boundary of the convex hull of the set of south-directed points and denote the vertices of it by $q_1,q_2,\ldots,q_l$ ordered by their $x$-coordinate. The rest of the points we call {\em inner} points. Similarly to Lemma \ref{hplemma} any line seeing at least one north-directed point sees one $p_i$ as well and any line seeing at least one south-directed point sees one $q_j$ as well and the $p_i$'s and $q_i$'s seen by a line are consecutive. 

First we give a coloring of the $p_i$'s and $q_j$'s with $3$ colors such that no two consecutive points have the same color and if for some $p_i$ and $q_j$ there is a line which sees exactly these two points, then these points have different colors. Any line $h$ which does not see inner points and sees at least two points, sees either exactly one $p_i$ and $q_j$ or at least two consecutive points of the same type. Thus the coloring will be good for all such lines.

We define a graph $G$ on the points $p_i$ and $q_j$. The consecutive points are connected forming a path of $p$'s and a path of $q$'s. Moreover, $p_i$ and $q_j$ are connected if there is a line which sees exactly these two points. Clearly, we need a proper $3$-coloring of this graph. For algorithmic reasons we take a graph with more edges and prove that it can be $3$-colored as well. In this graph $p_i$ and $q_j$ are connected if there is a line which sees no other points of the $p$-path and $q$-path. We claim that drawing the $p$-path and the $q$-path on two parallel straight lines, the $q$-path being on the higher line and in reverse order, and drawing all the edges with straight lines, we have a graph without intersecting edges. In other words the graph is a caterpillar-tree between two paths. Such a graph is outer-planar and thus three-colorable.

It is left to prove that there are no intersecting edges in the above defined drawing of $G$. Without loss of generality such two edges $e$ and $f$ would correspond to points $e_p$, $e_q$, $f_p$ and $f_q$ with $x$-coordinates $e_p^x<f_p^x$ and $e_q^x<f_q^x$ (the points with index $p$ are from the $p$-path and the points with index $q$ are from the $q$-path). The line seeing only $e_p$ and $e_q$ is denoted by $h_e$, the line seeing only $f_p$ and $f_q$ is denoted by $h_f$. These two lines divide the plane into four parts, which can be defined as the north, south, west and east part. Clearly from $e_p$ and $f_p$ one must be in the west part and one in the east part. By $e_p^x<f_p^x$, $e_p$ is in the west and $f_p$ is in the east part. This means that $h_e$ must be the line above the east and south parts and so $e_q$ must be in the east part and $f_q$ in the west, a contradiction together with $e_q^x<f_q^x$ (see Figure \ref{fighpproof3}).

Now we finish the coloring such that the condition will hold also for lines seeing inner points.
As in Theorem \ref{hptheorem} $(ii)$ for any other north-directed point $p$ there are two points $p_i$ and $p_{i+1}$ (the unique ones for which $p_i$ has smaller and $p_{i+1}$ has bigger $x$-coordinate than $p$) such that whenever a line $h$ sees $p$ then it sees $p_i$ or $p_{i+1}$ as well. Coloring $p$ differently from these two points guarantees that any $h$ seeing $p$ sees two differently colored points. We do the same for the south-directed points. This way whenever a line $h$ sees some point which is not a $p_i$ or $q_j$ then it sees points with both colors. Earlier we proved this for lines that see at least two points but do not see inner points, so these two claims together prove that the coloring is a $2$-proper coloring.
 
\end{proof}
\begin{Theorem} \label{halfplanes2}
$\ol{\chi_4}({\cal H})=2$, i.e., any finite family of half-planes can be colored with $2$ colors such that any point contained by at least $4$ of them is not monochromatic.
\end{Theorem}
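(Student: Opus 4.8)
The plan is to argue entirely in the dual, using the machinery set up before Theorem \ref{halfplanes}. By Observation \ref{dualobs} it suffices to $2$-color a finite set of oriented points so that every line looked at by at least $4$ of them sees both colors. I would split the oriented points into the set $A$ of north-directed points and the set $B$ of south-directed points, and fix a line $\ell$. As recorded in the dualization, a north point looks at $\ell$ exactly when it lies in the lower half-plane bounded by $\ell$, and a south point looks at $\ell$ exactly when it lies in the upper half-plane bounded by $\ell$. Writing $a$ for the number of north points below $\ell$ and $b$ for the number of south points above $\ell$, the goal is precisely that whenever $a+b\ge 4$ the set of these $a+b$ points is not monochromatic.

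The key idea is to color $A$ and $B$ independently, each by the algorithm of Theorem \ref{hptheorem2}, but with \emph{opposite} color conventions: color $A$ by that algorithm directly, and color $B$ by that algorithm followed by swapping the two colors. The point is that this algorithm does more than produce a $3$-proper coloring: by Observation \ref{hpobs}, the resulting coloring has no half-plane containing exactly two points both colored blue. Consequently, for $A$ every half-plane containing exactly two points of $A$ has at least one \emph{red} point, while for $B$ (after the swap) every half-plane containing exactly two points of $B$ has at least one \emph{blue} point. Since lower and upper half-planes are genuine half-planes, and since general position passes to the subsets $A$ and $B$, both Theorem \ref{hptheorem2} and Observation \ref{hpobs} apply to the standalone sets $A$ and $B$.

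It then remains to check each line $\ell$ with $a+b\ge 4$. If $a\ge 3$, the lower half-plane of $\ell$ contains at least $3$ points of $A$, hence is not monochromatic by Theorem \ref{hptheorem2} applied to $A$; symmetrically, $b\ge 3$ is handled by $B$. The only remaining possibility, given $a+b\ge 4$ together with $a\le 2$ and $b\le 2$, is $a=b=2$. In that balanced case the lower half-plane of $\ell$ contains exactly two points of $A$, so at least one of these two north points is red, and the upper half-plane of $\ell$ contains exactly two points of $B$, so at least one of these two south points is blue; thus among the four points looking at $\ell$ there are both a red and a blue one. This proves $\ol{\chi_4}({\cal H})\le 2$, and since $\ol{\chi_4}({\cal H})\ge 2$ is immediate (take four half-planes through a common point), equality follows. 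The main obstacle is exactly the $a=b=2$ case: applying Theorem \ref{hptheorem2} to each orientation class only controls half-planes with at least $3$ points, so the balanced split of four points into $2+2$ is genuinely not covered by that theorem alone, and the crux is to exploit the asymmetric strengthening of Observation \ref{hpobs} with opposite conventions so that one class is forced to contribute a red point and the other a blue point; without the opposite conventions, both pairs could conceivably be monochromatic in the same color.
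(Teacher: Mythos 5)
Your proposal is correct and takes essentially the same route as the paper: dualize via Observation \ref{dualobs}, color the north-directed and south-directed points separately by the algorithm of Theorem \ref{hptheorem2} with inverted colors on one class, and handle the critical $2+2$ case exactly as the paper does, via the asymmetric guarantee of Observation \ref{hpobs}. The only cosmetic difference is that you phrase the balanced case as each class forcibly contributing one red and one blue point, while the paper phrases it as ``otherwise both pairs are monochromatic in opposite colors''; the logic is identical.
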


\begin{proof}
By Observation \ref{dualobs} we can color directed points with respect to lines.
Given a set of directed points we have to color them with $2$ colors such that for any $h$ line seeing at least $4$ points, not all of these points have the same color.
We color the north-directed points with the same algorithm as in Theorem \ref{hptheorem2}. We color the south-directed points with the same algorithm as in Theorem \ref{hptheorem2} just with inverted colors. Take now an arbitrary $h$ line that sees at least $4$ points, we have to prove that it sees red and blue points too.
\item[Case 1.] {\em If a line $h$ sees at least $3$ north-directed points}, then by the algorithm $h$ sees red and blue points as well.
\item[Case 2.] {\em If a line $h$ sees at least $3$ south-directed points}, then by the algorithm $h$ sees red and blue points as well.
\item[Case 3.] {\em If a line $h$ sees exactly $2$ points of each kind}, then there are two options. If $h$ sees red and blue points as well of one kind, then we are done. Otherwise, by Observation \ref{hpobs}, $h$ must see $2$ red north-directed points and $2$ blue south-directed points, again seeing points with both colors.
\end{proof}

\begin{claim} \label{halfplanes3}
Colorings guaranteed by the theorems can be found in $O(n \log n)$ time for Theorem \ref{halfplanes2} and in $O(n^2)$ time for Theorem \ref{halfplanes}.
\end{claim}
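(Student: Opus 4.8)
The plan is to analyze the two algorithms separately, since they have different target complexities, and in each case to bound the cost of every phase of the construction appearing in the corresponding proof.

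For Theorem \ref{halfplanes2} the argument is immediate. After the $O(n)$ dualization of the half-planes into oriented points, the algorithm is nothing more than running the coloring procedure of Theorem \ref{hptheorem2} once on the north-directed points and once on the south-directed points, with the colors inverted on the second run (an extra $O(n)$ cost). By Claim \ref{hptheorem3} each of these two runs takes $O(n\log n)$ time, so the whole procedure is $O(n\log n)$. Combining the two colorings and checking the three cases of the proof is a linear pass, so nothing exceeds this bound.

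For Theorem \ref{halfplanes} I would walk through the phases of the proof. The dualization is $O(n)$, and computing the lower hull of the north-directed points and the upper hull of the south-directed points takes $O(n\log n)$. The core object is the graph $G$ on the hull vertices $p_1,\ldots,p_k$ and $q_1,\ldots,q_l$: its edges are the two spine paths together with the \emph{caterpillar} edges joining a $p_i$ to a $q_j$ whenever some line sees exactly these two among the hull vertices. Since the proof of Theorem \ref{halfplanes} shows $G$ is outerplanar, it has only $O(n)$ edges, and once $G$ is in hand a proper $3$-coloring of an outerplanar (indeed, of any planar) graph is computable in $O(n)$ time. The inner points are then colored one at a time: for each such point we locate the two consecutive hull vertices of its own type that bracket it in $x$-coordinate on the relevant chain, and assign it a color different from both.

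The main obstacle, and the reason the bound here is $O(n^2)$ rather than $O(n\log n)$, is the explicit construction of the caterpillar edges of $G$ together with the bracketing hull vertices for the inner points. The cleanest way to stay within $O(n^2)$ is to be deliberately naive: for each of the $O(n^2)$ candidate pairs $(p_i,q_j)$ we test in constant time whether a line sees exactly these two among the hull vertices and add the edge when it does; and for each of the $O(n)$ inner points we scan the $O(n)$ hull vertices of its type to find the two that bracket it. Each of these two steps costs $O(n^2)$ in total, and since every other phase is $O(n\log n)$, the whole algorithm runs in $O(n^2)$ time. I would emphasize that no faster construction is required, as the claim only asks for $O(n^2)$; the delicate point is simply to verify that the pairwise tests and the bracket searches can each be carried out correctly within their per-step budget using the convex-position structure established in the proof of Theorem \ref{halfplanes}.
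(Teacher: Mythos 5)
Your treatment of Theorem \ref{halfplanes2} is correct and identical to the paper's: dualize, run the coloring procedure of Theorem \ref{hptheorem2} once on the north-directed and once (with colors inverted) on the south-directed points, and charge each run to Claim \ref{hptheorem3}.

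For Theorem \ref{halfplanes} your overall plan is also the paper's plan: every phase except the construction of the caterpillar edges runs in $O(n\log n)$, and those edges are found by testing all $O(n^2)$ pairs $(p_i,q_j)$ at constant cost per pair. However, you never establish the one step that carries all the content, namely that the per-pair test really is $O(1)$. Read literally, ``test whether a line sees exactly these two among the hull vertices'' imposes one constraint per hull vertex; solving that as a two-variable linear feasibility problem with $O(n)$ constraints costs $\Theta(n)$ per pair even with linear-time fixed-dimension LP, i.e.\ $\Theta(n^3)$ overall, which overshoots the claimed bound. The missing observation --- which is essentially the entirety of the paper's proof of this claim --- is that, since the hull vertices seen by any line form a consecutive block on each chain, the condition ``some line sees $p_i$ and $q_j$ and no other vertex of the two paths'' is equivalent to the existence of a line lying above $p_i$, below $p_{i-1}$ and $p_{i+1}$, below $q_j$, and above $q_{j-1}$ and $q_{j+1}$: six linear inequalities in the two parameters (slope and intercept) of the line, whose feasibility is checkable in constant time. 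You gesture at ``the convex-position structure'' and explicitly call this verification ``the delicate point,'' but you leave it undone, so the proposal asserts precisely what it was supposed to prove. A smaller defect: your parenthetical claim that a proper $3$-coloring of \emph{any planar} graph can be computed in $O(n)$ time is false (planar graphs need not be $3$-colorable at all, and deciding $3$-colorability of planar graphs is NP-complete); fortunately you only need the outerplanar case, where $2$-degeneracy does give a linear-time $3$-coloring.
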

\begin{proof}
The algorithm in Theorem \ref{halfplanes2} clearly runs in time $O(n\log n)$ using Theorem \ref{hptheorem3}. The algorithm in Theorem \ref{halfplanes} can be made similarly to work in this time, only the building of the caterpillar tree might need $O(n^2)$ steps. Indeed, we just need to prove that deciding whether there is an edge between some $p_i$ and $q_j$, can be done in constant time. For that we need to check whether the linear equations for a line assuring that it goes above $q_{j-1}$, $q_{j+1}$ and below $q_j$, below $p_{i-1}$, $p_{i+1}$ and above $p_i$ have a solution. These can be checked in constant time.
\end{proof}

\section{Discussion} \label{secdiscussion}

Although for bottomless rectangles we have the best bounds, for axis-parallel rectangles intersecting a common base-line there are gaps between the lower and upper bounds (see Table \ref{table:bottomless}).
\begin{problem}
Give better bounds for $\ol{\chi_k}({\cal B'},n)$ and $\chi_k({\cal B'},n)$.
\end{problem}

As we mentioned for half-planes there is only one case left open.
\begin{problem}
Determine the value of $\ol{\chi_3}({\cal H})$, i.e., the fewest number of colors needed to color any finite family of half-planes such that if a point of the plane is contained by at least $3$ of them then not all of the containing half-planes have the same color.
\end{problem}

Note that $\ol{\chi_3}({\cal H})$ is either $2$ or $3$.

Let us also summarize the results known about axis-parallel rectangles and discs (the two most widely examined region types) and pose open problems regarding the unsolved cases.
The general case of axis-parallel rectangles (denoted by $\cal R$) is still far from being solved, the best bounds are $\chi_2({\cal R},n)=\Omega(\frac{\log n}{(\log \log n )^2})$ by Chen et al. \cite{chenpach} from below and recently by Ajwani et al. \cite{elbassioni} $\chi_2({\cal R},n)=\tilde{O}(n^{.382+\epsilon})$ from above, improving the previous bound $\chi_2({\cal R},n)=O(\sqrt{\frac{n \log \log n}{\log n}})$ by Pach et al. \cite{pachtoth}. So probably one of the most interesting problems is still to give better bounds for ${\chi_2}({\cal R},n)$, i.e., the fewest number of colors needed to color any set of $n$ points, such that if an axis-parallel rectangle contains at least two of them then not all of those contained by it have the same color.

For the dual case of coloring axis-parallel rectangles the proof of the upper bound ${\ol{cf}}({\cal R},n)=O(\log^2 n)$ (\cite{har}) can be modified easily to give the upper bound ${\ol{\chi_2}}({\cal R},n)=O(\log n)$. There is a matching lower bound ${\ol{\chi_2}}({\cal R},n)=\Omega(\log n)$ by Pach et al. \cite{tardosmanu} (they actually prove ${\ol{\chi_k}}({\cal R},n)=\Omega(\log n)$ for any fixed $k$). This implies the same lower bound for ${\ol{cf}}({\cal R},n)$, thus for conflict-free colorings there is still a slight gap here between the lower and upper bounds.

The case of discs in the plane (denoted by $\cal D$) is only partially solved. It is easy to see that a proper coloring of the Delaunay-triangulation of a point set $P$ gives a proper coloring of $H(P,\cal D)$. Indeed, every disc containing at least two points of $P$, contains two points connected by an edge in the Delaunay-graph. As this graph is planar, by the four-color theorem we can always color it with $4$ colors, thus we conclude that $\chi_2({\cal D})=4$. Further, Pach et al. \cite{pachtothtardos} showed that $\chi_k({\cal D})>2$ for any $k$. 
\begin{problem}
Is it true for some $k$ that $\chi_k({\cal D})=3$? I.e., is there a $k$ for which every finite set of points $P$ can be colored by $3$ colors such that if a disc contains at least $k$ points of $P$, then not all of them have the same color. If yes, find the smallest such $k$.
\end{problem}

Answering the question if $\ol{\chi_k}({\cal D})$ exists for some $k$, Smorodinsky \cite{smorodnew} showed that $\ol{\chi_2}({\cal D})=4$.

\begin{problem}
Give better bounds for $\ol{\chi_k}({\cal D})$, when $k>2$.
\end{problem}

\noindent {\bf Acknowledgment.} 
This paper is an extended version of \cite{kbcccg}. The results of the paper appear also in the PhD Dissertation of the author \cite{kbphd}. In these papers a different notation is used, $k$-proper colorings are called as $wcf_k$-colorings (weak conflict-free colorings).

The author is grateful to G\'abor Tardos for introducing this topic and for his many useful comments on this paper and also to Joseph O'Rourke for his advices on how to improve the presentation.



\end{document}